     \newcommand{\sM}{\mathcal M}
     \newcommand{\sO}{\mathcal O}
     \newcommand{\sR}{\mathcal R}
\def\XXint#1#2#3{{\setbox0=\hbox{$#1{#2#3}{\int}$ }
\vcenter{\hbox{$#2#3$ }}\kern-.6\wd0}}
\newcommand{\C}{\mathbb{C}}
\newcommand{\F}{\mathbb{F}}
\newtheoremstyle{ser}
{8pt}
{8pt}
{\it}
{}
{\sf}
{:}
{6mm}
{}
\newtheoremstyle{serr}
{8pt}
{8pt}
{\normalfont}
{}
{\sf}
{.}
{6mm}
{}
\theoremstyle{r}
\newtheorem{theorem}{Theorem}[section]
\newtheorem{definition}[theorem]{Definition}
\newtheorem{example}[theorem]{Example}
\newtheorem{proposition}[theorem]{Proposition}
\newtheorem{lemma}[theorem]{Lemma}
\newtheorem{corollary}[theorem]{Corollary}
\newtheorem{remark}[theorem]{Remark}
\begin{document}

\title{Rescaling Limits in Non-Archimedean Dynamics}

\author{Hongming Nie}
\address{Indiana University, 831 East Third Street, Rawles Hall, Bloomington, Indiana 47405, USA}
\email{nieh@indiana.edu}
\subjclass[2010]{Primary 37P45, 37P50.}
\keywords{Rescaling limits, non-Archimedean dynamics, Berkovich spaces.}
\maketitle

\begin{abstract}
Suppose $\{f_t\}$ is an analytic one-parameter family of rational maps defined over a non-Archimedean field $K$. We prove a finiteness theorem for the set of rescalings for $\{f_t\}$. This complements results of J. Kiwi.
\end{abstract}
\section{Introduction}
Let $K$ be an algebraically closed field. For $d\ge 1$, let $\mathrm{Rat}_d(K)$ be the space of degree $d$ rational maps over $K$, thought of as dynamical systems on $\mathbb{P}^1(K)$. The group $\mathrm{PGL}_2(K)$ acts on $\mathrm{Rat}_d(K)$ by conjugation. The moduli space of degree $d$ rational maps on $\mathbb{P}^1(K)$ is the quotient space $\sM_d(K):=\mathrm{Rat}_d(K)/\mathrm{PGL}_2(K)$. Milnor \cite{Mi1} considered the moduli space $\sM_2(\C)$ of quadratic complex rational maps and gave a dynamically natural compactification of $\sM_2(\C)$. Then using geometric invariant theory, Silverman \cite{Si1,Si2} compactified the moduli space $\sM_d(K)$ in general. DeMarco \cite{De2} also considered different compactifications of the moduli space $\sM_2(\C)$. To study the dynamics of complex rational maps approaching the boundary of $\mathrm{Rat}_d(\C)$ (or $\sM_d(\C)$), Kiwi \cite{Ki} considered rescaling limits for a holomorphic family $\{f_t\}$ (resp. a sequence $\{f_n\}$) in $\mathrm{Rat}_d(\C)$. These arise as limits $M_t^{-1}\circ f_t^q\circ M_t\to g$ (resp. $M_n^{-1}\circ f_n^q\circ M_n\to g$) of rescaled iterates where the convergence is locally uniform outside some finite subset of $\mathbb{P}^1(\C)$. By regarding a holomorphic family as a rational map with coefficients in the field of formal Puiseux series, and by studying its induced action on the corresponding Berkovich space, Kiwi proved for any given holomorphic one-parameter family of degree $d\ge 2$ rational maps, there are at most $2d-2$ dynamically independent rescalings such that the corresponding rescaling limits are not postcritically finite \cite[Theorem 1, Theorem 2]{Ki}. Later, Arfeux \cite{Ar1} proved the same results using the Deligne-Mumford compactifications of the moduli spaces of the stable punctured spheres.\par
An algebraically closed complete valued field is isomorphic to either the field of complex numbers $\C$ or a non-Archimedean field \cite{DR}. Our main result translates Kiwi's finiteness result to non-Archimedean algebraically closed fields, subject to a natural tameness hypothesis. We now set up the statement.\par
Throughout this paper, $K$ will denote an algebraically closed field which is complete with respect to a nontrivial non-Archimedean absolute value $|\cdot|_K$. Let $\phi(z)\in K(z)$ be a rational map. We can write $\phi=\phi_1\circ\phi_2$, where $\phi_1$ is a separable rational map and $\phi_2(z)=z^{p^j}$ for some $j\ge 0$ if the field $K$ has positive characteristic $p>0$ or $\phi_2(z)=z$ if the field $K$ has characteristic zero. The rational map $\phi_1$ is called the \textit{separable part} of $\phi$. The degree of $\phi_1$ is called the \textit{nontrivial degree} of $\phi$ and the preimages of critical points of $\phi_1$ under $\phi_2$ are called the \textit{nontrivial critical points} of $\phi$. We say rational map $\phi\in K(z)$ is \textit{postcritically finite at nontrivial critical points} if each nontrivial critical point of $\phi$ has a finite forward orbit; equivalently, each critical value of $\phi_1$ has a finite forward orbit under $\phi$.\par
As in Kiwi's study of degenerating rational maps over $\C$, we now study families $\phi_t$ approaching boundary of $\mathrm{Rat}_d(K)$ (or $\sM_d(K)$). Since the field $K$ is not locally compact with respective to the absolute value $|\cdot|_K$, the definition of rescaling limits in \cite{Ki} needs to be slightly modified. The non-Archimedean property turns out to make pointwise convergence suitable; see Definition \ref{rescaling} and Proposition \ref{ptwise-coeff}. For instance, let $f_t(z)=(z^3+t)/z\in K(z)$, then, as $t\to 0$, $f_t(z)$ converges to $z^2$ pointwise on $\mathbb{P}^1(K)\setminus\{0\}$, but $f_t(0)=\infty$ for all $t\not=0$.  Since the field of formal Puiseux series over $K$ is not algebraically closed if $char\ K=p>0$, we work on the field $\mathbb{L}$ of Hahn series over $K$. For Puiseux series and Hahn series, we refer \cite{ Hahn, Ke,Ki1, Ki2}.  For an analytic family $\{f_t\}\subset \mathrm{Rat}_d(K)$, we can associate to $\{f_t\}$ a rational map $\mathbf{f}:\mathbb{P}^1(\mathbb{L})\to\mathbb{P}^1(\mathbb{L})$. The space $\mathbb{P}^1(\mathbb{L})$ is naturally a subset of the corresponding Berkovich space $\mathbb{P}^1_{\mathrm{Ber}}(\mathbb{L})$; see \cite{BR,Jo} for details. The rational map $\mathbf{f}$ induces a map on $\mathbb{P}_{\mathrm{Ber}}^1(\mathbb{L})$ extending its action on $\mathbb{P}^1(\mathbb{L})$, so we also use notation $\mathbf{f}$ for the induced map. Let $\sR_\mathbf{f}$ be the Berkovich ramification locus, that is the set of points in $\mathbb{P}_{\mathrm{Ber}}^1(\mathbb{L})$ such that the local degrees of $\mathbf{f}$ at these points are at least $2$, i.e $\sR_\mathbf{f}=\{\xi\in\mathbb{P}_{\mathrm{Ber}}^1(\mathbb{L}):\text{deg}_\xi\textbf{f}\ge 2\}$. It is a closed subset of $\mathbb{P}_{\mathrm{Ber}}^1(\mathbb{L})$ with no isolated points and has at most $\deg\mathbf{f}-1$  connected components \cite[Theorem A]{Fa1}, each of which has tree structure. Following Trucco \cite{Tr}, we say the rational map $\mathbf{f}:\mathbb{P}_{\mathrm{Ber}}^1(\mathbb{L})\to\mathbb{P}_{\mathrm{Ber}}^1(\mathbb{L})$ is \textit{tame} if $\sR_{\mathbf{f}}$ has only finitely many points with valence at least $3$ in $\sR_{\mathbf{f}}$. \par
We will prove
\begin{theorem}\label{main-thm}
Let $\{f_t(z)\}\subset K(z)$ be an analytic family of rational maps of nontrivial degree $d\ge 2$ and let $\mathbf{f_1}$ be the separable part of the associated rational map $\mathbf{f}$ of $\{f_t\}$. Assume $\mathbf{f_1}$ is tame. Then there are at most $2d-2$ pairwise dynamically independent rescalings for $\{f_t\}$ such that the corresponding rescaling limits are not postcritically finite at nontrivial critical points.
\end{theorem}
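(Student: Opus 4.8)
The plan is to prove Theorem~\ref{main-thm} by transplanting Kiwi's argument to the Berkovich projective line $\mathbb{P}^1_{Ber}(\mathbb{L})$, using the tameness of $\mathbf{f_1}$ to recover the finiteness of the ramification data that, over $\C$, is automatic.

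\textbf{Step 1 (the dictionary).} Using Definition~\ref{rescaling} and Proposition~\ref{ptwise-coeff}, I would first establish that every rescaling $(M_t,q)$ for $\{f_t\}$ corresponds to a point $\xi\in\mathbb{P}^1_{Ber}(\mathbb{L})$ of type II with $\mathbf{f}^q(\xi)=\xi$; that the associated rescaling limit is $PGL_2(K)$-conjugate to the reduction (tangent map) $T_\xi\mathbf{f}^q$, a rational map over the residue field of $\mathbb{L}$, which is $K$; and, conversely, that every periodic type II point arises in this way. Moreover, two rescalings are dynamically dependent precisely when their type II points lie in the same $\mathbf{f}$-cycle, so pairwise dynamically independent rescalings correspond to periodic type II points lying in pairwise distinct cycles. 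The only genuine work here is reconciling the pointwise-convergence notion of rescaling limit with the reduction map; the rest is the standard non-Archimedean correspondence.

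\textbf{Step 2 (reduction to the separable part).} Write $\mathbf{f}=\mathbf{f_1}\circ\mathbf{f_2}$ with $\mathbf{f_2}$ purely inseparable. Since $\mathbb{L}$ is perfect, $\mathbf{f_2}$ is a homeomorphism of $\mathbb{P}^1_{Ber}(\mathbb{L})$ preserving the type of each point, $\sR_{\mathbf{f}}=\sR_{\mathbf{f_1}}$, and the nontrivial critical points of $\mathbf{f}$ are exactly the $\mathbf{f_2}$-preimages of the critical points of $\mathbf{f_1}$; since $\mathbf{f_1}$ is separable of degree $d$, Riemann--Hurwitz gives precisely $2d-2$ nontrivial critical points of $\mathbf{f}$ counted with multiplicity. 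Along a periodic cycle of type II points, the separable part of $T_\xi\mathbf{f}^q$ is governed by the way $\mathbf{f_1}$ ramifies in the directions at the cycle points, so that ``the rescaling limit $g$ is not postcritically finite at nontrivial critical points'' becomes: some critical value of the separable part of $T_\xi\mathbf{f}^q$ has infinite forward orbit under $g$ (in particular $g$ then has nontrivial degree at least $2$).

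\textbf{Step 3 (the counting).} Let $\xi_1,\dots,\xi_N$ be periodic type II points, one in each of $N$ pairwise distinct cycles, whose reductions are not postcritically finite at nontrivial critical points. For each $i$, Step~2 yields a direction at a point of the $\xi_i$-cycle which is a critical value of the separable reduction and has infinite orbit; pulling the associated critical direction back through the cycle, and invoking tameness to apply the Berkovich Riemann--Hurwitz relation for $\mathbf{f_1}$, produces a nonempty sub-multiset of the $2d-2$ nontrivial critical points ``attached'' to the $\xi_i$-cycle. The crux is to show that these sub-multisets are pairwise disjoint: if a critical point $c$ were attached to both the $\xi_i$-cycle and the $\xi_j$-cycle with $i\neq j$, then along suitable subsequences the forward $\mathbf{f}$-orbit of $c$ would be forced to visit infinitely many pairwise distinct directions at $\xi_i$ and also infinitely many at $\xi_j$, which is impossible because all but finitely many directions at $\xi_i$ are contained in the single direction at $\xi_j$ pointing toward $\xi_i$. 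Carrying this out rigorously---accounting for unequal periods via a common multiple, handling the finitely many critical and critical-value directions that contain other cycle points, and doing the multiplicity bookkeeping on the finite tree formed by the cycles together with the finitely many branch points of $\sR_{\mathbf{f_1}}$---is exactly where tameness is indispensable, and I expect it to be the main obstacle. It produces a multiplicity-respecting injection from $\{1,\dots,N\}$ into the nontrivial critical points of $\mathbf{f}$, whence $N\le 2d-2$.
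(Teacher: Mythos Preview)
Your Step~1 matches the paper's Proposition~\ref{main-prop} and is correct. Step~2 is morally right, though the claim $\sR_{\mathbf{f}}=\sR_{\mathbf{f_1}}$ is false when the inseparable part is nontrivial (Frobenius has local degree $p^j$ everywhere, so $\sR_{\mathbf{f}}=\mathbb{P}^1_{Ber}(\mathbb{L})$); this is harmless for the argument.

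The substantive divergence is in Step~3, and there is a real gap. The paper does \emph{not} attach critical \emph{points} to cycles and then argue disjointness by a direction-counting contradiction. Instead it proves a clean intermediate lemma (Proposition~\ref{crit-fin} and Corollary~\ref{crit-fin-rl}): if the tangent map $(\mathbf{f}^q)_\ast$ at a type~II cycle $\sP$ is not postcritically finite at nontrivial critical points, then some critical \emph{value} of $\mathbf{f_1}$ lies in the \emph{basin} of $\sP$. Since basins of distinct cycles are disjoint by definition, and $\mathbf{f_1}$ has at most $2d-2$ critical values, the bound is immediate. Your disjointness argument is a roundabout proof of ``basins are disjoint'', but the step before it---producing a critical point $c$ whose $\mathbf{f}$-orbit visits infinitely many distinct directions at $\xi_i$---is not justified: even if $c$ sits in the critical direction $\vec v$, once some iterate of $\mathbf{B}_{\xi}(\vec v)^-$ maps onto all of $\mathbb{P}^1_{Ber}(\mathbb{L})$ (the surjective case of Proposition~\ref{fundamental}(2)), the orbit of $c$ need not track the tangent-map orbit of $\vec v$ at all. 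The paper's Proposition~\ref{crit-fin} handles exactly this ``blow-up'' case by an inductive argument that keeps relocating a critical value of $\mathbf{f_1}$ inside the successive balls until the balls eventually stop blowing up, after which the orbit is trapped and converges to $\sP$.

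You also misplace the role of tameness. It is not used for a ``Berkovich Riemann--Hurwitz relation'' or for bookkeeping on a finite tree of branch points. It is used to prove a non-Archimedean Rolle theorem in positive characteristic (Lemma~\ref{rolle-positive}): a separable \emph{tame} map with two zeros in a closed disk has a critical point in that disk. This is what makes Proposition~\ref{inj-crit} go through (non-injective on a ball $\Rightarrow$ a critical point of $\mathbf{f_1}$ in the ball with critical value in the image ball), and that proposition is precisely the engine inside Proposition~\ref{crit-fin}. Without tameness, Rolle fails (e.g.\ $z^p-z$ over $\mathbb{L}$ has $p$ zeros in $\overline D(0,1)$ and no finite critical point), and your Step~3 cannot manufacture the critical point it claims.
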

In section \ref{ex}, we give some examples of analytic families with rescaling limits that are not postcritically finite at nontrivial critical points.\par
The tameness hypothesis of the separable part $\mathbf{f_1}$ is needed in order to prove a non-Archimedean Rolle's theorem in positive characteristic, see Lemma \ref{rolle-positive}. If $K$ has characteristic zero or positive characteristic $p>\text{deg}\ \mathbf{f_1}$, then the rational map $\mathbf{f_1}:\mathbb{P}_{\mathrm{Ber}}^1(\mathbb{L})\to\mathbb{P}_{\mathrm{Ber}}^1(\mathbb{L})$ is tame \cite[Corollary 6.6]{Fa1}. Thus
\begin{corollary}
Suppose the field $K$ has characteristic zero. Let $\{f_t(z)\}\subset K(z)$ be an analytic family of degree $d\ge 2$ rational maps. Then there are at most $2d-2$ pairwise dynamically independent rescalings for $\{f_t\}$ such that the corresponding rescaling limits are not postcritically finite.
\end{corollary}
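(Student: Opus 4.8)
The plan is to obtain the corollary as an immediate specialization of Theorem \ref{main-thm}, so the work consists entirely of checking that, when $K$ has characteristic zero, the extra tameness hypothesis and the qualified conclusion of the theorem collapse to the unqualified statement given here.

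First I would record the effect of $\mathrm{char}\, K = 0$ on the relevant definitions. In the factorization $\phi = \phi_1 \circ \phi_2$ described above, characteristic zero forces $\phi_2(z) = z$; hence every rational map over $K$, and likewise over the Puiseux field $\mathbb{L}$ (which again has characteristic zero), is equal to its own separable part. Applying this to the associated rational map, one gets $\mathbf{f_1} = \mathbf{f}$, the nontrivial degree of $\{f_t\}$ is its ordinary degree $d$, the nontrivial critical points of $\mathbf{f}$ are precisely its ordinary critical points, and the property ``postcritically finite at nontrivial critical points'' is literally ``postcritically finite.''

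Next, the tameness hypothesis comes for free: by the discussion immediately preceding the corollary, namely \cite[Corollary 6.6]{Fa1}, when $K$ has characteristic zero the induced map $\mathbf{f_1} : \mathbb{P}^1_{Ber}(\mathbb{L}) \to \mathbb{P}^1_{Ber}(\mathbb{L})$ is tame, so the standing hypothesis of Theorem \ref{main-thm} is automatically satisfied. Invoking Theorem \ref{main-thm} then produces at most $2d - 2$ pairwise dynamically independent rescalings for $\{f_t\}$ whose rescaling limits are not postcritically finite at nontrivial critical points, which by the previous paragraph is exactly ``not postcritically finite.'' There is no substantive obstacle here; the only point meriting a moment's care is the bookkeeping observation that $\mathbb{L}$ inherits characteristic zero from $K$, which is what makes \cite[Corollary 6.6]{Fa1} genuinely applicable to $\mathbf{f_1}$.
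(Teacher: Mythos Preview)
Your proposal is correct and matches the paper's own argument exactly: the paper deduces the corollary directly from Theorem \ref{main-thm} after observing (via \cite[Corollary 6.6]{Fa1}) that $\mathbf{f_1}$ is automatically tame when $K$ has characteristic zero. The bookkeeping you spell out---that $\mathrm{char}\,K=0$ forces $\phi_2(z)=z$, so that nontrivial degree, nontrivial critical points, and ``postcritically finite at nontrivial critical points'' all reduce to their unqualified versions---is precisely the implicit content of the word ``Thus'' preceding the corollary.
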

\subsection*{Outline}
In section \ref{pr}, we recall the relevant backgrounds of Berkovich space and define the rescaling limits for an analytic family of rational maps over $K$. The goal of section \ref{re} is to discuss the reduction map and show the relations between reductions and rescaling limits. Section \ref{bd} is devoted to restating Kiwi's results which are still true for the case when $K$ has characteristic zero. Finally, we prove Theorem \ref{main-thm} in section \ref{pc} and give examples to illustrate it in section \ref{ex}.

\section{Preliminaries}\label{pr}
\subsection{Non-Archimedean fields}
For the field $K$, let $|K^\times|_K\subset(0,\infty)$ be the set of absolute values attained by nonzero elements of $K$, which is called the value group of $K$. Then $|K^\times|_K$ is dense in $(0,\infty)$ since $K$ is algebraically closed, and hence $K$ can not be locally compact. Let $\mathcal{O}_K=\{z\in K:|z|_K\le 1\}$ be the ring of integers of $K$ and let $\mathcal{M}_K=\{z\in K:|z|_K<1\}$ be the unique maximal ideal of $\mathcal{O}_K$. Let $k=\mathcal{O}_K/\mathcal{M}_K$ be the residue field. Note if $char\ K=p>0$ then $char\ k=p$, but if $char\ K=0$, then $k$ could have any characteristic. For instance, for a prime number $p\ge 2$, if $K$ is the completion of the algebraic closure of the formal power series field $\F_p[[t]]$ with its natural absolute value, then $char\ K=char\ k=p$; if $K$ is the complex $p-$adic field $\C_p$, then $k=\overline{\F}_p$, the algebraic closure of $\F_p$, and $char\ K=0$ but $char\ k=p$.\par
Given $a\in K$ and $r>0$, define 
$$D(a,r):=\{z\in K: |z-a|_K<r\}\ \ \text{and}\ \ \overline{D}(a,r):=\{z\in K: |z-a|_K\le r\}.$$
If $r\in|K^\times|_K$, we say that $D(a,r)$ is an open rational disk in $K$ and $\overline{D}(a,r)$ is a closed rational disk in $K$. If $r\not\in|K^\times|_K$, we call $D(a,r)=\overline{D}(a,r)$ an irrational disk. Let $U(a,r)\subset K$ be a disk centered at $a\in K$ with radius $r>0$, that is, $U$ has the form $D(a,r)$ or $\overline{D}(a,r)$. Then if $b\in U(a,r)$, we have $U(a,r)=U(b,r)$. Moreover, the radius $r$ is the same as the diameter of $U(a,r)$, that is $r=\sup\{|z-w|_K:z,w\in U(a,r)\}$. Furthermore, if two disks have a nonempty intersection, then one must contain the other. Finally, we should mention here every disk in $K$ is both open and closed under the topology of $K$. \par
Let $\mathbb{L}:=K[[t^\mathbb{Q}]]$ be the field of Hahn series over $K$. It consists of all formal sums of the form $\sum_{n\ge 0}a_nt^{q_n}$, where $\{q_n\}$ is an increasing sequence of rational numbers and $a_n\in K$. Since $\mathbb{Q}$ is divisible under addition, the field $\mathbb{L}$ is algebraically closed. It can be equipped with a non-Archimedean absolute value $|\cdot|_{\mathbb{L}}$ by fixing a number $\epsilon\in(0,1)$ and defining $|\sum_{n\ge 0}a_nt^{q_n}|_{\mathbb{L}}=\epsilon^{n_0}$, where $n_0$ is the smallest positive integer such that $a_n\not=0$. With respect to $|\cdot|_{\mathbb{L}}$, the field $\mathbb{L}$ is complete. Then the ring of integer of the field $\mathbb{L}$ is 
$$\mathcal{O}_\mathbb{L}=\{z\in\mathbb{L}:|z|_\mathbb{L}\le 1\}=\left\{\sum\limits_{n\ge 0}a_nt^{q_n}:q_n\ge 0\right\}$$
and the unique maximal ideal $\sM_\mathbb{L}$ of $\sO_\mathbb{L}$ consists of series with zero constant term, i.e.
$$\mathcal{M}_\mathbb{L}=\{z\in\mathbb{L}:|z|_\mathbb{L}<1\}=\left\{\sum\limits_{n\ge 0}a_nt^{q_n}:q_n> 0\right\}.$$
The residue field $\mathcal{O}_\mathbb{L}/\mathcal{M}_\mathbb{L}$ is canonically isomorphic to $K$.

\subsection{The Berkovich projective line}
In this subsection, we summarize some fundamental properties of the Berkovich projective line, for details we refer \cite{BR,Ben,Be}.\par
The Berkovich affine line $\mathbb{A}_{\mathrm{Ber}}^1(K)$ is the set of all multiplicative seminorms on the ring $K[z]$ of polynomials over $K$, whose restriction to the field $K\subset K[z]$ is equal to the given absolute value $|\cdot|_K$. For $a\in K$ and $r\ge 0$, let $\xi_{a,r}$ be the seminorm defined by $|f|_{\xi_{a,r}}=\sup_{z\in\overline{D}(a,r)}|f(z)|_K$. Then there are $4$ types of points in $\mathbb{A}_{\mathrm{Ber}}^1(K)$:\par
1. Type I. $\xi_{a,0}$ for some $a\in K$.\par
2. Type II. $\xi_{a,r}$ for some $a\in K$ and $r\in|K^\times|$. \par
3. Type III. $\xi_{a,r}$ for some $a\in K$ and $r\notin|K^\times|$. \par 
4. Type IV. A limit of seminorms $\{\xi_{a_i,r_i}\}_{i\ge 0}$, where the corresponding sequence of closed disks $\{\overline{D}_{a_i,r_i}\}_{i\ge 0}$ satisfies $\overline{D}_{a_{i+1},r_{i+1}}\subset\overline{D}_{a_i,r_i}$ and $\cap\overline{D}_{a_i,r_i}=\emptyset$.\par 
We can identify $K$ with the type I points in $\mathbb{A}_{\mathrm{Ber}}^1(K)$ via $a\to\xi_{a,0}$. The point $\xi_{0,1}\in\mathbb{A}_{\mathrm{Ber}}^1(K)$ is called the Gauss point and denoted by $\xi_G$. We put the weak topology on $\mathbb{A}_{\mathrm{Ber}}^1(K)$, which makes the map $\mathbb{A}_{\mathrm{Ber}}^1(K)\to[0,+\infty)$ sending $\xi$ to $|f|_\xi$ continuous for each $f\in K[z]$. Then $\mathbb{A}_{\mathrm{Ber}}^1(K)$ is locally compact, Hausdorff and uniquely path-connected.\par
The Berkovich projective line $\mathbb{P}_{\mathrm{Ber}}^1(K)$ is obtained by gluing two copies of $\mathbb{A}_{\mathrm{Ber}}^1(K)$ along $\mathbb{A}_{\mathrm{Ber}}^1(K)\setminus\{0\}$ via the map $\xi\to 1/\xi$. Then we can associate the Gelfand topology on $\mathbb{P}_{\mathrm{Ber}}^1(K)$. The Berkovich projective line $\mathbb{P}_{\mathrm{Ber}}^1(K)$ is a compact, Hausdorff, uniquely path-connected topological space and contains $\mathbb{P}^1(K)$ as a dense subset.\par
The space $\mathbb{P}_{\mathrm{Ber}}^1(K)$ has tree structure. For a point $\xi\in\mathbb{P}_{\mathrm{Ber}}^1(K)$, we can define an equivalence relation on $\mathbb{P}_{\mathrm{Ber}}^1(K)\setminus\{\xi\}$, that is, $\xi'$ is equivalent to $\xi''$ if $\xi'$ and $\xi''$ are in the same connected component of $\mathbb{P}_{\mathrm{Ber}}^1(K)\setminus\{\xi\}$. Such an equivalence class $\vec{v}$ is called a direction at $\xi$. We say that the set $T_{\xi}\mathbb{P}_{\mathrm{Ber}}^1(K)$ formed by all directions at $\xi$ is the tangent space at $\xi$. For $\vec{v}\in T_{\xi}\mathbb{P}_{\mathrm{Ber}}^1(K)$, denote by $\mathbf{B}_{\xi}(\vec{v})^-$ the component of $\mathbb{P}_{\mathrm{Ber}}^1(K)\setminus\{\xi\}$ corresponding to the direction $\vec{v}$. If $\xi\in\mathbb{P}_{\mathrm{Ber}}^1(K)$ is a type I or IV point, $T_{\xi}\mathbb{P}_{\mathrm{Ber}}^1(K)$ consists of a single direction. If $\xi\in\mathbb{P}_{\mathrm{Ber}}^1(K)$ is a type II point, the directions in $T_{\xi}\mathbb{P}_{\mathrm{Ber}}^1(K)$ are in one-to-one correspondence with the elements in $\mathbb{P}^1(k)$. If $\xi\in\mathbb{P}_{\mathrm{Ber}}^1(K)$ is a type III point, $T_{\xi}\mathbb{P}_{\mathrm{Ber}}^1(K)$ consists of two directions. Note the Gauss point $\xi_G$ is a type II point. We can identify $T_{\xi_G}\mathbb{P}_{\mathrm{Ber}}^1(K)$ to $\mathbb{P}^1(k)$ by the correspondence $T_{\xi_G}\mathbb{P}_{\mathrm{Ber}}^1(K)\to\mathbb{P}^1(k)$ sending $\vec{v}_x$ to $x$, where $\vec{v}_x$ is the direction at $\xi_G$ such that $\mathbf{B}_{\xi_G}(\vec{v}_x)^-$ contains all the type I points whose images are $x$ under the canonical reduction map $\mathbb{P}^1(K)\to\mathbb{P}^1(k)$.
\subsection{Rational maps}
In this subsection, we consider rational maps over the field $K$ and define an analytic family of rational maps over $K$. For rational maps over a non-Archimedean field, we refer \cite{BR,Ben,Ben1}.\par
We first define analytic maps on a disk $U\subset K$.
\begin{definition}
Let $U\subset K$ be a disk and $z_0\in U$. We say a map $f:U\to K$ is \textit{analytic} if $f$ can be written as a power series 
$$f(z)=\sum_{n=0}^\infty c_n(z-z_0)^n\in K[[z-z_0]],$$
which converges for all $z\in U$. The smallest $n$ such that $c_n\not=0$ is called the \textit{order} of $f$ at $z_0$ and denoted $\mathrm{ord}_{z_0}(f)$.
\end{definition}
It is easy to check that analytic property is independent of the choice of $z_0\in U$. Moreover, if $U=\overline{D}(z_0,r)$ is a rational closed disk, then $\sum_{n=0}^\infty c_n(z-z_0)^n$ converges for each $z\in U$ if and only if $\lim_{n\to\infty} |c_n|_Kr^n=0$. For rational open or irrational disks, $\lim_{n\to\infty} |c_n|_Kr^n=0$ implies $\sum_{n=0}^\infty c_n(z-z_0)^n$ converges, but the converse is not true.\par
We denote by $\mathbb{P}^1(K):=K\cup\{\infty\}$ the projective line over $K$. We define the spherical metric on $\mathbb{P}^1(K)$ as follows: for points $z=[x:y]$ and $w=[u:v]$ in $\mathbb{P}^1(K)$,
$$\Delta(z,w):=\frac{|xv-yu|_K}{\max\{|x|_K,|y|_K\}\max\{|u|_K,|v|_K\}}.$$
Equivalently, 
$$\Delta(z,w):=
\begin{cases}
\frac{|z-w|_K}{\max\{1,|z|_K\}\max\{1,|w|_K\}}, &\text{if}\  z,w\in K,\\
\frac{1}{\max\{1,|z|_K\}}, &\text{if}\  z\in K,w=\infty.
\end{cases}$$\par
Recall a degree $d\ge 1$ rational map $f:\mathbb{P}^1(K)\to\mathbb{P}^1(K)$ is represented by a pair $f_1,f_2\in K[X,Y]$ of degree $d$ homogeneous polynomials with no common factors, that is, $f([X:Y])=[f_1(X,Y):f_2(X,Y)]$ for all $[X:Y]\in\mathbb{P}^1(K)$. Equivalently, the map $f$ can be considered as the quotient of two relatively prime polynomials, of which the greatest degree is $d$. Let $\mathrm{Rat}_d(K)$ denote the set of rational maps of degree $d$ over $K$. Then $\mathrm{Rat}_d(K)$ can be naturally identified with an open subset of $\mathbb{P}^{2d+1}(K)$ via the map $\mathrm{Rat}_d(K)\to\mathbb{P}^{2d+1}(K)$ sending $(a_dz^d+\cdots+a_0)/(b_dz^d+\cdots+b_0)$ to $[a_d:\cdots:a_0:b_d:\cdots:b_0]$.\par
Let $\phi(z)\in K(z)$ be a rational map. Suppose $z_0\in\mathbb{P}^1(K)$ and set $w_0=\phi(z_0)$. Pick $\psi_1,\psi_2\in\mathrm{PGL}_2(K)$ such that $\psi_1(0)=z_0$ and $\psi_2(w_0)=0$, and define $\Phi=\psi_2\circ\phi\circ\psi_1$. The multiplicity $m_\phi(z_0)$ of $\phi$ at $z_0$ is the order of $\Phi$ at $0$. The weight $w_\phi(z_0)$ of $\phi$ at $z_0$ is the order of $\Phi'$ at $0$. If $\Phi'(z)\equiv 0$, we set $w_\phi(z_0)=\infty$. This can happen: for example, if $char\ K=p$ and $\phi(z)=z^p$, then $\phi'(z)=0$ for each $z\in K$. A point $z_0\in\mathbb{P}^1(K)$ is called a critical point of $\phi$ if $w_\phi(z_0)>0$. Denote $\mathrm{Crit}(\phi)\subset\mathbb{P}^1(K)$ for the set of all critical points of $\phi$. If every point  $z\in\mathbb{P}^1(K)$ is a critical point of $\phi$, then we say $\phi$ is inseparable. Otherwise, $\phi$ is called separable. Recall that for every rational map $\phi\in K(z)$, we can write $\phi(z)=\phi_1\circ\phi_2(z)$, where $\phi_1$ is a separable rational map and $\phi_2(z)=z^{p^j}$ for some $j\ge 0$ if the field $K$ has positive characteristic $p>0$ or $\phi_2(z)=z$ if the field $K$ has characteristic zero. It is called the (in)separable decomposition of $\phi$. The rational map $\phi_1$ is called the separable part of $\phi$. We define the nontrivial degree $\text{deg}_0\phi:=\text{deg}\phi_1$ and the nontrivial critical set $\mathrm{Crit}_0(\phi):=\phi_2^{-1}(\mathrm{Crit}(\phi_1))$ of $\phi$.\par
\begin{definition}
Let $U\subset K$ be a disk containing $0$. A collection $\{f_t\}_{t\in U}\subset\mathbb{P}^{2d+1}(K)$ is a \textit{$1$-dimensional separable analytic family of degree $d\ge 1$ rational maps} if the map $F:U\to\mathbb{P}^{2d+1}(K)$ sending $t$ to $f_t$ is an analytic map such that $f_t\in\mathrm{Rat}_d(K)$ is separable for all $t\not=0$. If $\{f_t\}_{t\in U}$ is a $1$-dimensional separable analytic family of degree $1$ rational maps, we call it a moving frame.
\end{definition}
Let $U\subset K$ be a disk containing $0$. We say $\{f_t\}_{t\in U}\subset\mathbb{P}^{2d+1}(K)$ is a \textit{$1$-dimensional analytic family of nontrivial degree $d'\ge 1$ rational maps} if we can write $f_t=g_t\circ h$, where $\{g_t\}_{t\in U}\subset\mathbb{P}^{2d'+1}(K)$ is a $1$-dimensional separable analytic family of degree $d'\ge 1$ rational maps and $h(z)=z^{p^j}$ for some $j\ge 0$ if $char\ K=p>0$ or $h(z)=z$ if $char\ K=0$.
\begin{remark}
\begin{enumerate}
\item We are really interested in the germ defined by an analytic family, so considering a small disk $V\subset U$ containing $0$ if necessary, we can always assume $U=\overline{D}(0,r)$ is a rational closed disk.
\item For an analytic family $\{f_t\}$ on $U$, we can write 
$$f_t(z)=P_t(z)/Q_t(z)=(a_d(t)z^d+\cdots+a_0(t))/(b_d(t)z^d+\cdots+b_0(t))$$
and denote by $\ell$ the minimum among the orders of the $a_i(t)$ and $b_j(t)$, at the origin, $i,j=1,\cdots,d$.
Let 
$$C=\max_{0\le i,j\le d}\{1,|\lim\limits_{t\to 0}a_i(t)/t^\ell|_K,|\lim\limits_{t\to 0}b_j(t)/t^\ell|_K\}$$
Let $x\in K$ be an element such that $|x|_K=C$. For $t$ sufficiently small, by considering $g_t(z)=(P_t(z)/xt^\ell)/(Q_t(z)/xt^\ell)$ if necessary, we can assume $\{f_t\}\subset\mathcal{O}_K(z)$ and that $f_t$ has at least one coefficient with absolute value $1$. Therefore, throughout this paper, for a rational map $\phi(z)\in K(z)$, we assume $\phi(z)\in\mathcal{O}_K(z)$ and at least one coefficient has absolute value $1$. 
\end{enumerate}
\end{remark}
For an analytic family 
$$\left\{f_t(z)=\frac{a_d(t)z^d+\cdots+a_0(t)}{b_d(t)z^d+\cdots+b_0(t)}\right\}\subset K(z)$$ 
of degree $d$ rational maps, let $\mathbf{a_d},\cdots,\mathbf{a_0},\mathbf{b_d},\cdots,\mathbf{b_0}$ be the power series expressions of the coefficients $a_d(t),\cdots,a_0(t),b_d(t),\cdots,b_0(t)$, respectively. Then the degree $d$ rational map $\mathbf{f}:\mathbb{P}^1(\mathbb{L})\to\mathbb{P}^1(\mathbb{L})$ given by 
$$\mathbf{f}(z)=\frac{\mathbf{a_d}z^d+\cdots+\mathbf{a_0}}{\mathbf{b_d}z^d+\cdots+\mathbf{b_0}}$$
is called, following Kiwi, the rational map associated to $\{f_t\}$. The rational map $\mathbf{f}$ induces a map from $\mathbb{P}_{\mathrm{Ber}}^1(\mathbb{L})$ to itself. We use the same notation $\mathbf{f}$ for the induced map.

\subsection{Rescaling limits for an analytic family}
A non-Archimedean field is locally compact if and only if it is discretely valued and has finite residue field \cite{Ca}. Then $K$ is not locally compact, hence neither is $\mathbb{P}^1(K)$. Thus, we define the rescaling limits for an analytic family of rational maps over $K$ in the following sense:
\begin{definition}\label{rescaling}
Let $\{f_t\}$ be an analytic family of rational maps of nontrivial degree at least $2$. A moving frame $\{M_t\}$ is called a \textit{rescaling} for $\{f_t\}$ if there exist an integer $q\ge 1$, a  rational map $g:\mathbb{P}^1(K)\to\mathbb{P}^1(K)$ of nontrivial degree $d'\ge 2$ and a finite subset $S$ of $\mathbb{P}^1(K)$ such that, as $t\to 0$, 
$$M_t^{-1}\circ f_t^q\circ M_t(z)\to g(z)$$
pointwise on $\mathbb{P}^1(K)\setminus S$. We say $g$ is a \textit{rescaling limit} for $\{f_t\}$ in $\mathbb{P}^1(K)\setminus S$. The minimal $q\ge 1$ such that the above holds is called the \textit{period} of the rescaling $\{M_t\}$.
\end{definition}
Following Kiwi \cite{Ki}, we define the following equivalence relations on the set of all rescalings.
\begin{definition}
Two moving frames $\{M_t\}$ and $\{L_t\}$ are \textit{equivalent} if there exists $M\in\mathrm{Rat}_1(K)$ such that $M_t^{-1}\circ L_t\to M$ as $t\to 0$.
\end{definition}
\begin{definition}
Two rescalings $\{M_t\}$ and $\{L_t\}$ for an analytic family $\{f_t\}$ are \textit{dynamically dependent} if there exist an integer $l\ge 0$ and a nonconstant rational map $g$ such that $L_t^{-1}\circ f^l\circ M_t\to g$, as $t\to 0$, pointwise outside some finite set.
\end{definition}
If $\{M_t\}$ and $\{L_t\}$ are two equivalent rescalings for an analytic family $\{f_t\}$ , then they are dynamically dependent. The converse is not true in general.

\section{Reductions}\label{re}
Recall $K$ is any arbitrary complete algebraically closed non-Archimedean field. Let $g\in\mathcal{O}_K(z)$ be a rational map. Then reducing the coefficients of $g$ modulo $\mathcal{M}_K$ and canceling common factors, we get a rational map $\tilde g$ over the residue field $k$, which is called the reduction of $g$. Now we can define a map
$$\rho_K:\mathrm{Rat}_d(K)\to\mathrm{Rat}_{\le d}(k),$$
where $\mathrm{Rat}_{\le d}(k)$ is the space of degree at most $d$ rational maps over $k$, sending $g$ to its reduction $\tilde g$. We call $\rho_K$ the \textit{reduction map} for rational maps over $K$.\par
We first state an easy proposition and omit the proof.
\begin{proposition}\label{reduction}
Let $\phi(z),\psi(z)\in K(z)$ be rational maps, and let $\rho(\phi)$ and $\rho(\psi)$ be their reductions, respectively. Then\par
\begin{enumerate}
\item $\rho(\phi\cdot\psi)=\rho(\phi)\cdot\rho(\psi)$,
\item $\rho(\phi+\psi)=\rho(\phi)+\rho(\psi)$,
\item If $\deg\rho(\psi)\ge 1$, then $\rho(\phi\circ\psi)=\rho(\phi)\circ\rho(\psi)$.
\end{enumerate}
\end{proposition}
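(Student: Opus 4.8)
The plan is to deduce all three identities from two elementary facts about the coefficient-wise reduction $f\mapsto\tilde f$ of polynomials over $\mathcal{O}_K$: first, that $f\mapsto\tilde f$ is a ring homomorphism $\mathcal{O}_K[z]\to k[z]$, so it carries sums and products of $\mathcal{O}_K$-coefficient polynomials to the corresponding sums and products over $k$; and second, Gauss's lemma $|fg|_{\xi_G}=|f|_{\xi_G}\,|g|_{\xi_G}$ for $f,g\in K[z]$, which is immediate because $\xi_G$ is a multiplicative seminorm, together with the observation that for $f\in\mathcal{O}_K[z]$ one has $\tilde f\ne 0$ exactly when $|f|_{\xi_G}=1$. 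I would first fix normalized coprime representatives $\phi=P_1/Q_1$ and $\psi=P_2/Q_2$ with $P_i,Q_i\in\mathcal{O}_K[z]$ and $\max(|P_i|_{\xi_G},|Q_i|_{\xi_G})=1$, so that $\rho(\phi)$ is the rational map over $k$ obtained from the pair $(\tilde P_1,\tilde Q_1)$ by cancelling common factors (and at least one of $\tilde P_1,\tilde Q_1$ is nonzero), and likewise $\rho(\psi)$ from $(\tilde P_2,\tilde Q_2)$.

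For parts (1) and (2), the product $\phi\cdot\psi$ and the sum $\phi+\psi$ are represented by the $\mathcal{O}_K$-coefficient pairs $(P_1P_2,\,Q_1Q_2)$ and $(P_1Q_2+P_2Q_1,\,Q_1Q_2)$. Reducing entrywise and using the homomorphism property gives $\widetilde{P_1P_2}=\tilde P_1\tilde P_2$, $\widetilde{Q_1Q_2}=\tilde Q_1\tilde Q_2$, and $\widetilde{P_1Q_2+P_2Q_1}=\tilde P_1\tilde Q_2+\tilde P_2\tilde Q_1$; after cancelling common factors the resulting pairs represent precisely $\rho(\phi)\cdot\rho(\psi)$ and $\rho(\phi)+\rho(\psi)$. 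The one point requiring care is that one need not first divide $(P_1P_2,Q_1Q_2)$ (resp. the pair for the sum) by a non-unit before reducing, i.e. that at least one of its entries already has $|\cdot|_{\xi_G}=1$; this follows from Gauss's lemma and the normalization by tracking which coefficients of the $P_i,Q_i$ are units.

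For part (3), set $d=\deg\phi$ and write $P_1(w)=\sum_i p_iw^i$, $Q_1(w)=\sum_i q_iw^i$. Multiplying the numerator and denominator of $\phi\circ\psi=P_1(P_2/Q_2)/Q_1(P_2/Q_2)$ by $Q_2^{\,d}$ exhibits $\phi\circ\psi$ as the $\mathcal{O}_K$-coefficient pair $(\sum_i p_iP_2^{\,i}Q_2^{\,d-i},\ \sum_i q_iP_2^{\,i}Q_2^{\,d-i})$; reducing entrywise, it reduces to $(\sum_i \tilde p_i\tilde P_2^{\,i}\tilde Q_2^{\,d-i},\ \sum_i \tilde q_i\tilde P_2^{\,i}\tilde Q_2^{\,d-i})$ over $k$. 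At this stage I would invoke the hypothesis $\deg\rho(\psi)\ge 1$: it forces $\tilde Q_2\not\equiv 0$ and $\tilde P_2\ne\alpha\tilde Q_2$ for all $\alpha\in k$, so the pair equals $(\tilde Q_2^{\,d}\,\tilde P_1(\tilde P_2/\tilde Q_2),\ \tilde Q_2^{\,d}\,\tilde Q_1(\tilde P_2/\tilde Q_2))$, where $\tilde P_1(\tilde P_2/\tilde Q_2)\not\equiv 0$ whenever $\tilde P_1\not\equiv 0$ (a nonzero polynomial over the algebraically closed field $k$ cannot vanish identically on a nonconstant rational function) and likewise for $\tilde Q_1$. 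Since at least one of $\tilde P_1,\tilde Q_1$ is nonzero, one of the two entries is a nonzero polynomial, hence has $|\cdot|_{\xi_G}=1$, so no rescaling is needed; cancelling the common factor $\tilde Q_2^{\,d}$ and any remaining common factors then gives $\rho(\phi\circ\psi)=\tilde P_1(\tilde P_2/\tilde Q_2)/\tilde Q_1(\tilde P_2/\tilde Q_2)=\rho(\phi)\circ\rho(\psi)$.

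The main obstacle, in all three parts, is exactly this last step: checking that the entrywise reduction of the naive numerator/denominator representatives does not produce the undefined pair $(0,0)$, so that the reduced pair genuinely computes $\rho$ of the left-hand side. For (3) this degeneration is blocked precisely by $\deg\rho(\psi)\ge 1$ — if the inner reduction were constant, $\tilde P_2/\tilde Q_2$ could agree with a root of $\tilde P_1$ and kill the numerator — which is why the hypothesis is imposed there; for (1) and (2) it is the routine, if slightly fussy, Gauss-norm accounting described above.
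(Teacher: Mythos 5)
The paper gives no argument here at all---it says ``We first state an easy proposition and omit the proof''---so there is nothing of the author's to compare against. Taken on its own, your write-up is correct and supplies exactly the verification being skipped: coefficientwise reduction $\mathcal{O}_K[z]\to k[z]$ is a ring homomorphism, the Gauss norm is multiplicative, and in part (3) the hypothesis $\deg\rho(\psi)\ge 1$ makes $\tilde P_2/\tilde Q_2$ a nonconstant, hence transcendental, element of $k(z)$, which is precisely what prevents the reduced numerator and denominator from vanishing simultaneously. You have also correctly isolated where the hypothesis in (3) is used and what goes wrong without it.

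One assertion needs a caveat. In parts (1) and (2) you claim that the pairs $(P_1P_2,\,Q_1Q_2)$ and $(P_1Q_2+P_2Q_1,\,Q_1Q_2)$ always have an entry of Gauss norm $1$. That is false in general: if $|P_1|_{\xi_G}=|Q_2|_{\xi_G}=1$ while $|Q_1|_{\xi_G},|P_2|_{\xi_G}<1$, i.e.\ $\rho(\phi)=\infty$ and $\rho(\psi)=0$, then both entries of $(P_1P_2,Q_1Q_2)$ have Gauss norm strictly less than $1$ and the entrywise reduction is $(0,0)$. The accounting you describe succeeds exactly when the right-hand side $\rho(\phi)\cdot\rho(\psi)$ (resp.\ $\rho(\phi)+\rho(\psi)$) is a well-defined rational map over $k$; the failures are precisely the indeterminate expressions $0\cdot\infty$ and $\infty+\infty$, for which the identity is vacuous anyway. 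So the proof is complete once you record this implicit hypothesis rather than asserting the norm claim unconditionally. It would also strengthen the exposition to state the lemma you use silently throughout: for any representative $(A,B)$ with $A,B\in\mathcal{O}_K[z]$ and $(\tilde A,\tilde B)\neq(0,0)$, the reduction of $A/B$ equals $\tilde A/\tilde B$ after cancellation; this follows by factoring out $\gcd(A,B)$, whose Gauss norm is forced to equal $1$ by multiplicativity.
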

In Proposition \ref{reduction} $(3)$, if $\deg\rho(\psi)=0$, the situation is complicated. For example, let $K$ be the completion of the formal Puiseux series over $\C$ and define rational maps $\phi(z)=z^2/t^2$ and $\psi(z)=tz^2$ over $K$. Then $\rho(\phi\circ\psi)(z)=z^4$ but $(\rho(\phi)\circ\rho(\psi))(z)=\infty$ since $\rho(\phi)=\infty$.\par 
Recall that $\mathbb{L}$ is the field of Hahn series over $K$. Since $\mathbb{L}$ is an algebraically closed and complete non-Archimedean field, we can consider the reduction map $\rho_\mathbb{L}:\mathrm{Rat}_d(\mathbb{L})\to\mathrm{Rat}_{\le d}(K)$ of rational maps over $\mathbb{L}$. 
\begin{definition}
Let $\{f_t\}$ be an analytic family of degree $d\ge 1$ rational maps. We say $\{f_t\}$ has good reduction if the associated rational map $\mathbf{f}$ has \textit{good reduction}, that is, $\deg\rho_\mathbb{L}(\mathbf{f})=d$. Otherwise, we say $\{f_t\}$ has \textit{bad reduction}. If there is a moving frame $\{M_t\}\subset\mathbb{P}^3(K)$ such that $\{M_t^{-1}\circ f_t\circ M_t\}$ has good reduction, we say that $\{f_t\}$ has \textit{potentially good reduction}.
\end{definition}
Given $f=[f_1:f_2]\in\mathbb{P}^{2d+1}(K)$, we can write 
$$f=[f_1:f_2]=[H_f\hat f_1:H_f\hat f_2]=H_f[\hat f_1:\hat f_2]=H_f\hat f,$$
where $H_f=\gcd(f_1,f_2)$ is a homogeneous polynomial and $\hat f=[\hat f_1:\hat f_2]$ is a rational map of degree at most $d$.
\begin{proposition}\label{coeff-ptwise}
Suppose $\{f_t\}$ is an analytic family of degree $d\ge 1$ rational maps such that $f_t\to H_f\hat f$, as $t\to 0$, in $\mathbb{P}^{2d+1}(K)$. Then, as $t\to 0$, $f_t$ converges to $\hat f$ pointwise on $\mathbb{P}^1(K)\setminus\{H_f=0\}$.
\end{proposition}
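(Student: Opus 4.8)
The plan is to reduce the statement to the elementary fact that the quotient map $K^2\setminus\{\mathbf{0}\}\to\mathbb{P}^1(K)$ is continuous at any point whose limiting lift is nonzero, with the hypothesis $z_0\notin\{H_f=0\}$ serving only to guarantee this nonvanishing. First I would lift the projective convergence to coefficientwise convergence of homogeneous representatives: write $f_t=[f_{1,t}:f_{2,t}]$ with $f_{1,t},f_{2,t}\in K[X,Y]$ homogeneous of degree $d$ and, for $t\ne 0$, without common factor, and write $H_f\hat f=[F_1:F_2]$ with $F_i=H_f\hat f_i$. Since $[f_{1,t}:f_{2,t}]\to[F_1:F_2]$ in $\mathbb{P}^{2d+1}(K)$, and convergence there is equivalent to convergence in an affine chart containing the limit point, after multiplying each $f_t$ by a suitable scalar in $K^\times$ one may assume the coefficient vector of $(f_{1,t},f_{2,t})$ converges in $K^{2d+2}$ to that of $(F_1,F_2)$; under the normalization of the Remark this is simply the continuity at $t=0$ of the analytic coefficient functions $a_i(t),b_j(t)$.

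Next I would fix $z_0=[x_0:y_0]\in\mathbb{P}^1(K)$ with $H_f(x_0,y_0)\ne 0$ and evaluate. For this fixed $(x_0,y_0)$ the map $(f_{1,t},f_{2,t})\mapsto\big(f_{1,t}(x_0,y_0),f_{2,t}(x_0,y_0)\big)$ is coordinatewise a polynomial in the coefficients, so $v_t:=\big(f_{1,t}(x_0,y_0),f_{2,t}(x_0,y_0)\big)$ converges in $K^2$ to $v:=\big(F_1(x_0,y_0),F_2(x_0,y_0)\big)=H_f(x_0,y_0)\cdot\big(\hat f_1(x_0,y_0),\hat f_2(x_0,y_0)\big)$. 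Two observations then make this decisive: for $t\ne 0$ the pair $f_{1,t},f_{2,t}$ has no common factor, hence does not vanish simultaneously at $z_0$, so $v_t\ne\mathbf{0}$ and $f_t(z_0)=[v_t]$ is a genuine point of $\mathbb{P}^1(K)$; and $\gcd(\hat f_1,\hat f_2)=1$ forces $\big(\hat f_1(x_0,y_0),\hat f_2(x_0,y_0)\big)\ne\mathbf{0}$, so together with $H_f(x_0,y_0)\ne 0$ we get $v\ne\mathbf{0}$ and $[v]=[\hat f_1(x_0,y_0):\hat f_2(x_0,y_0)]=\hat f(z_0)$.

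Finally I would pass from $v_t\to v$ in $K^2$ to $[v_t]\to[v]$ in the spherical metric, the only step needing a little care. Writing $v_t=(p_t,q_t)$ and $v=(p,q)$, the formula defining $\Delta$ gives $\Delta([v_t],[v])=\dfrac{|p_tq-q_tp|_K}{\max\{|p_t|_K,|q_t|_K\}\,\max\{|p|_K,|q|_K\}}$. The numerator tends to $|pq-qp|_K=0$ by continuity of the $K$-operations, and by the ultrametric inequality $\max\{|p_t|_K,|q_t|_K\}=\max\{|p|_K,|q|_K\}$ as soon as $|p_t-p|_K,|q_t-q|_K<\max\{|p|_K,|q|_K\}$, so the denominator is eventually the nonzero constant $\max\{|p|_K,|q|_K\}^2$; hence $\Delta\big(f_t(z_0),\hat f(z_0)\big)\to 0$, which is the asserted pointwise convergence on $\mathbb{P}^1(K)\setminus\{H_f=0\}$. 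I do not expect a serious obstacle here: the real content of the hypothesis is that it keeps $v$ bounded away from $\mathbf{0}$, and if $z_0$ were a zero of $H_f$ the limiting lift could vanish while each $f_t(z_0)$ remains well defined, so $[v_t]$ need not approach $\hat f(z_0)$ — exactly the behaviour exhibited by $f_t(z)=(z^3+t)/z$ at $z_0=0$.
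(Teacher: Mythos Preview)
Your proof is correct and somewhat cleaner than the paper's. The paper works in affine coordinates, writing $f_t(z)=P_t(z)/Q_t(z)$ and $\hat f(z)=P(z)/Q(z)$, and then splits into two cases according to whether $\hat f(z)=\infty$ or not: in the finite case it bounds $\Delta(f_t(z),\hat f(z))$ by $|f_t(z)-\hat f(z)|_K$ and computes directly, and in the infinite case it argues separately that $f_t(z)\to\infty$. Your argument stays in homogeneous coordinates throughout, recasting the problem as continuity of the quotient map $K^2\setminus\{\mathbf 0\}\to\mathbb{P}^1(K)$ at a nonzero limit $v$, which eliminates the case split entirely. The ultrametric observation that $\max\{|p_t|_K,|q_t|_K\}$ is eventually the constant $\max\{|p|_K,|q|_K\}$ is a nice way to handle the denominator in one stroke; the paper instead shows the denominator is merely bounded away from zero in each case. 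Both approaches are elementary, but yours is more uniform and makes the role of the hypothesis $H_f(z_0)\ne 0$ more transparent.
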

\begin{proof}
Write $f_t=[P_t:Q_t]$ and $\hat f=[P:Q]$. Since $f_t$ converges to $H_f\hat f$ in $\mathbb{P}^{2d+1}(K)$, there is a $\lambda\in K\setminus\{0\}$ such that for any $[X:Y]\in\mathbb{P}^1(K)$, as $t\to 0$, $P_t(X,Y)$ converges to $\lambda H(X,Y)P(X,Y)$ and  $Q_t(X,Y)$ converges to $\lambda H(X,Y)Q(X,Y)$. So if $[X:Y]\not\in\{H_f=0\}$, we have 
$f_t([X:Y])$ converges to $[P(X,Y):Q(X,Y)]$. Hence $f_t$ converges to $\hat f$ pointwise on $\mathbb{P}^1(K)\setminus\{H_f=0\}$.
\end{proof}
\begin{corollary}
Let $\{f_t\}$ be an analytic family of degree $d\ge 2$ rational maps. If $\deg_0\rho_{\mathbb{L}}(\mathbf f)\ge 2$, then $\{M_t=z\}$ is a rescaling for $\{f_t\}$ with corresponding rescaling limit $\rho_{\mathbb{L}}(\mathbf{f})$.
\end{corollary}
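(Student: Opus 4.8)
The plan is to verify Definition \ref{rescaling} directly with the trivial data: period $q=1$, the constant moving frame $M_t\equiv z$, the rational map $g=\rho_{\mathbb{L}}(\mathbf{f})$, and a finite set $S$ coming from the common factor that drops under reduction. First I would note that $\{M_t\equiv z\}$ is a moving frame: the constant family given by the identity $z\mapsto z$ is a $1$-dimensional separable analytic family of degree $1$ rational maps. Then $M_t^{-1}\circ f_t^q\circ M_t = f_t$, so everything reduces to showing that $f_t\to\rho_{\mathbb{L}}(\mathbf{f})$ pointwise on $\mathbb{P}^1(K)$ off a finite set.

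The core step is to identify $\rho_{\mathbb{L}}(\mathbf{f})$ with the limit of $f_t$ in $\mathbb{P}^{2d+1}(K)$. By the normalization in the Remark we may assume $f_t\in\mathcal{O}_K(z)$ with at least one coefficient of absolute value $1$; write $f_t=\frac{a_d(t)z^d+\cdots+a_0(t)}{b_d(t)z^d+\cdots+b_0(t)}$ with $a_i,b_j$ analytic on $U$ and $\mathbf{a_i},\mathbf{b_j}\in\mathcal{O}_\mathbb{L}$ their power series. Under the canonical isomorphism $\mathcal{O}_\mathbb{L}/\mathcal{M}_\mathbb{L}\cong K$, the reduction of $\mathbf{a_i}$ is its constant term $a_i(0)=\lim_{t\to 0}a_i(t)$, and similarly for $\mathbf{b_j}$. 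Because the normalization keeps some coefficient a unit in the limit, the point $[a_d(0):\cdots:a_0(0):b_d(0):\cdots:b_0(0)]\in\mathbb{P}^{2d+1}(K)$ is well-defined and equals $\lim_{t\to 0}f_t$; moreover this is exactly the point $H_f\hat f$, where $H_f$ is the gcd of the two reduced homogeneous polynomials and, by the very definition of $\rho_{\mathbb{L}}$, the degree-$\le d$ map $\hat f=\rho_{\mathbb{L}}(\mathbf{f})$.

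Now Proposition \ref{coeff-ptwise} applies verbatim and gives $f_t\to\hat f=\rho_{\mathbb{L}}(\mathbf{f})$ pointwise on $\mathbb{P}^1(K)\setminus\{H_f=0\}$, where $S:=\{H_f=0\}$ is the zero set of a nonzero homogeneous polynomial of degree $\le d$, hence finite. Taking $q=1$ and $M_t\equiv z$, we conclude $M_t^{-1}\circ f_t^q\circ M_t=f_t\to\rho_{\mathbb{L}}(\mathbf{f})$ pointwise on $\mathbb{P}^1(K)\setminus S$. Since the hypothesis is precisely $\deg_0\rho_{\mathbb{L}}(\mathbf{f})\ge 2$, the limit map has nontrivial degree $\ge 2$, so all the requirements of Definition \ref{rescaling} are met and $\{M_t\equiv z\}$ is a rescaling for $\{f_t\}$ with rescaling limit $\rho_{\mathbb{L}}(\mathbf{f})$. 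The only point demanding care is the bookkeeping in the middle paragraph: confirming that after normalization the limiting point in $\mathbb{P}^{2d+1}(K)$ does not degenerate and that reduction modulo $\mathcal{M}_\mathbb{L}$ of the associated coefficients coincides with evaluating the power series at $t=0$; once this translation is in place, the statement is immediate from Proposition \ref{coeff-ptwise} and Definition \ref{rescaling}.
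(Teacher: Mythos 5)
Your proposal is correct and follows the same route as the paper: identify the limit of $f_t$ in $\mathbb{P}^{2d+1}(K)$ with $H\rho_{\mathbb{L}}(\mathbf{f})$ and then invoke Proposition \ref{coeff-ptwise} to get pointwise convergence off the finite zero set of $H$. You simply spell out the bookkeeping (normalization, reduction modulo $\mathcal{M}_\mathbb{L}$ as evaluation at $t=0$) that the paper leaves implicit.
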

\begin{proof}
Note as $t\to 0$ there is a homogeneous polynomial $H\in K[X,Y]$ such that $f_t$ converges to $H\rho_{\mathbb{L}}(\mathbf{f})$ in $\mathbb{P}^{2d+1}(K)$. The conclusion then follows Proposition \ref{coeff-ptwise}.
\end{proof}
The converse of Proposition \ref{coeff-ptwise} is also true. 
\begin{proposition}\label{ptwise-coeff}
Let $\{f_t\}$ be an analytic family of degree $d\ge 1$ rational map and let $S\subset\mathbb{P}^1(K)$ be a finite subset. Suppose $f_t$ converges to $\hat f$ pointwise, as $t\to 0$, on $\mathbb{P}^1(K)\setminus S$. Then there exists a homogeneous polynomial $H$ of degree $d-\deg\hat f$ with zeros in $S$ such that $f_t\to H\hat f$, as $t\to 0$, in $\mathbb{P}^{2d+1}(K)$. 
\end{proposition}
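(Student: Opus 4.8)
The plan is to translate the pointwise convergence of $\{f_t\}$ into convergence of its coefficients, apply Proposition~\ref{coeff-ptwise} in that setting, and then read off the statement.

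\emph{Convergence in $\mathbb{P}^{2d+1}(K)$.} First I would put $\{f_t\}$ in the running normalization of the paper: after shrinking $U$ and dividing the homogeneous coordinates of $f_t$ by a suitable element $xt^{\ell}$, where $\ell$ is the least of the orders at $0$ of the coefficients $a_i(t),b_j(t)$, one may assume $f_t\in\mathcal{O}_K(z)$, that each of the $2d+2$ coordinates of $f_t$ is a power series in $t$ with coefficients in $\mathcal{O}_K$, and that some coordinate has absolute value $1$ for all $t$ near $0$. This changes neither $f_t$ as a rational map (for $t\neq 0$) nor the hypothesis or the desired conclusion. Now every coordinate of $f_t$ converges in $K$, as $t\to 0$, to its constant term; since one coordinate stays a unit, this takes place inside a fixed affine chart of $\mathbb{P}^{2d+1}(K)$, so $f_t$ converges in $\mathbb{P}^{2d+1}(K)$ to the point $g$ whose coordinates are those constant terms. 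Write $g=[g_1:g_2]$ with $g_1,g_2\in K[X,Y]$ homogeneous of degree $d$, put $H=\gcd(g_1,g_2)$, and factor $g=H\hat g$ with $\hat g=[\hat g_1:\hat g_2]$ a rational map of degree $d-\text{deg}\ H$.

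\emph{Identifying the limit.} Applying Proposition~\ref{coeff-ptwise} to $\{f_t\}$ and its limit $g=H\hat g$ gives $f_t\to\hat g$ pointwise on $\mathbb{P}^1(K)\setminus\{H=0\}$, while by hypothesis $f_t\to\hat f$ pointwise on $\mathbb{P}^1(K)\setminus S$. Since $K$ is algebraically closed it is infinite, hence so is $\mathbb{P}^1(K)\setminus(S\cup\{H=0\})$; there $\hat g=\hat f$, and two rational maps that agree on an infinite set coincide, so $\hat g=\hat f$. Consequently $\text{deg}\ H=d-\text{deg}\ \hat f$ and $f_t\to H\hat f=g$ in $\mathbb{P}^{2d+1}(K)$, which gives all of the statement except that $\{H=0\}\subseteq S$.

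\emph{The zeros of $H$.} This last point is where I expect the real difficulty. Fix $z_0$ with $H(z_0)=0$; after a coordinate change assume $z_0\in K$, so $g_1(z_0)=g_2(z_0)=0$. With $f_t=P_t/Q_t$ in the normalized form, $P_t(z_0)$ and $Q_t(z_0)$ are power series in $t$ with zero constant term, so both tend to $0$, whereas $\hat g(z_0)=\hat f(z_0)$ is a genuine point of $\mathbb{P}^1(K)$ because $\gcd(\hat g_1,\hat g_2)=1$. If $z_0\notin S$, then $f_t(z_0)=P_t(z_0)/Q_t(z_0)\to\hat f(z_0)$, and I would try to derive a contradiction by comparing the orders of vanishing in $t$ of $P_t(z_0)$ and $Q_t(z_0)$, together with their leading $t$-coefficients, against the local expansion of $g=H\hat g$ at $z_0$: the aim is to show such convergence can hold only if $P_t$ and $Q_t$ share a zero for all small $t\neq 0$, contradicting $f_t\in Rat_d(K)$. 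Making this order-of-vanishing bookkeeping precise — the case $\text{ord}_t\,P_t(z_0)=\text{ord}_t\,Q_t(z_0)$ being the delicate one — is the crux of the argument; everything before it follows formally from Proposition~\ref{coeff-ptwise} and the rigidity of rational maps.
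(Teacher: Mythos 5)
Your first two steps are correct and are essentially the paper's own argument: the paper normalizes the coefficients, observes that $f_t$ converges in $\mathbb{P}^{2d+1}(K)$ to a point which it writes as $H\rho_{\mathbb{L}}(\mathbf{f})$ --- the reduction of the associated map over $\mathbb{L}$ being precisely the rational map $\hat g$ you extract from the constant terms after cancelling the common factor $H$ --- and then invokes Proposition~\ref{coeff-ptwise} to conclude $\hat f=\rho_{\mathbb{L}}(\mathbf{f})$. Your appeal to rigidity of rational maps on an infinite set is the point the paper leaves implicit, and the degree count $\deg H=d-\deg\hat f$ is as in the paper.

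The step you single out as the crux, namely $\{H=0\}\subseteq S$, is indeed where the problem lies, but not in a way that order-of-vanishing bookkeeping can repair: the implication is false, and the paper's ``It is easy to check $H$ satisfies the required conditions'' glosses over this. Consider $f_t(z)=\frac{z^2+t^3}{z-t}$. For small $t\neq 0$ the numerator and denominator are coprime, so this is an analytic family of degree $2$ maps; its coefficients converge to $[X^2:XY]=X\cdot[X:Y]$, so $H=X$ and $\hat f(z)=z$. Pointwise convergence to $\hat f$ holds on \emph{all} of $\mathbb{P}^1(K)$: for fixed $z\neq 0,\infty$ one has $f_t(z)-z=\frac{t^3+tz}{z-t}\to 0$, the point $\infty$ is fixed throughout, and at the zero $z=0$ of $H$ one computes $f_t(0)=-t^2\to 0=\hat f(0)$. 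Thus the hypothesis of the proposition is satisfied with $S=\emptyset$, while $H$ has degree $1$ and so cannot have its zeros in $S$. This also shows why your proposed contradiction cannot be derived: here $P_t(0)=t^3$ and $Q_t(0)=-t$ both tend to $0$, yet their ratio converges to the correct value $\hat f(0)$ without $P_t$ and $Q_t$ ever sharing a zero. The correct relation between $S$ and $H$ is the one furnished by Proposition~\ref{coeff-ptwise}: the locus where pointwise convergence fails is \emph{contained in} $\{H=0\}$, not the reverse. What you have actually proved --- the convergence $f_t\to H\hat f$ in $\mathbb{P}^{2d+1}(K)$ with $\deg H=d-\deg\hat f$ --- is the part of the proposition the paper uses later, and your treatment of it is sound.
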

\begin{proof}
Let $\mathbf{f}$ be the associated rational map of $\{f_t\}$. Then there exists homogeneous polynomial $H$ such that $f_t(z)$ converges to $H\rho_{\mathbb{L}}(\mathbf{f})$, as $t\to 0$, in $\mathbb{P}^{2d+1}(K)$.
Thus, by Proposition \ref{coeff-ptwise}, $\hat f=\rho_{\mathbb{L}}(\mathbf{f})$. It is easy to check $H$ satisfies the required conditions.
\end{proof}
\begin{corollary}
Suppose $K$ has positive characteristic $p>0$. Let $\{f_t(z)\}\subset K(z)$ be an analytic family of rational maps of nontrivial degree at least $2$. Let $\mathbf{f}$ be the associated rational map of $\{f_t\}$. If $\mathbf{f}$ is inseparable, then all the rescaling limits of $\{f_t\}$ are inseparable.
\end{corollary}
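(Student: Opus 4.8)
Fix a rescaling $\{M_t\}$ for $\{f_t\}$, say of period $q$, with rescaling limit $g$ on $\mathbb{P}^1(K)\setminus S$, and let $\mathbf{M}\in PGL_2(\mathbb{L})$ be the degree one rational map associated to the moving frame $\{M_t\}$. The plan is to realize $g$ as the reduction $\rho_{\mathbb{L}}(\mathbf{G})$ of the rational map $\mathbf{G}:=\mathbf{M}^{-1}\circ\mathbf{f}^q\circ\mathbf{M}$ over $\mathbb{L}$, to observe that $\mathbf{G}$ is inseparable because $\mathbf{f}$ is, and to conclude that the reduction of an inseparable map is again inseparable. The delicate point is the first step --- verifying that $\mathbf{G}$ really is the rational map associated to the conjugated family $\{M_t^{-1}\circ f_t^q\circ M_t\}$ and that no degree drop occurs --- after which Proposition \ref{ptwise-coeff} does the work; the remaining steps are purely formal.

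For the first step, note that each $M_t$ with $t\ne 0$ is invertible, so the conjugated family $\{M_t^{-1}\circ f_t^q\circ M_t\}$ is again an analytic family, consisting of rational maps of degree exactly $(\deg f_t)^q$; computing its coefficients term by term as Puiseux series shows its associated rational map is $\mathbf{G}=\mathbf{M}^{-1}\circ\mathbf{f}^q\circ\mathbf{M}$, of degree $(\deg\mathbf{f})^q$, so in particular there is no degree drop. By Definition \ref{rescaling}, $M_t^{-1}\circ f_t^q\circ M_t\to g$ pointwise on $\mathbb{P}^1(K)\setminus S$, so Proposition \ref{ptwise-coeff} applies; as in its proof, the pointwise limit of an analytic family on the complement of a finite set equals the reduction of the family's associated rational map, whence $g=\rho_{\mathbb{L}}(\mathbf{G})$.

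For the second step, recall $\mathrm{char}\,\mathbb{L}=\mathrm{char}\,K=p>0$. Since $\mathbf{f}$ is inseparable, its formal derivative vanishes identically, and the chain rule for formal derivatives of rational functions gives $(\mathbf{f}^q)'\equiv 0$ and then $\mathbf{G}'\equiv 0$ as well. As $\mathbf{G}$ has positive degree it is non-constant, so every point of $\mathbb{P}^1(\mathbb{L})$ is critical for $\mathbf{G}$; that is, $\mathbf{G}$ is inseparable.

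For the last step, write the $($in$)$separable decomposition $\mathbf{G}=\mathbf{G}_1\circ z^{p^{j}}$ with $j\ge 1$. The map $z^{p^{j}}$ over $\mathbb{L}$ is already in normalized form, so $\rho_{\mathbb{L}}(z^{p^{j}})=z^{p^{j}}$ over $K$, which has degree $p^{j}\ge 1$; hence Proposition \ref{reduction}$(3)$ yields $g=\rho_{\mathbb{L}}(\mathbf{G})=\rho_{\mathbb{L}}(\mathbf{G}_1)\circ z^{p^{j}}$. Therefore $g\in K(z^{p^{j}})$, so $g'\equiv 0$; and $g$ is non-constant, since by Definition \ref{rescaling} it has nontrivial degree $\ge 2$. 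Consequently every point of $\mathbb{P}^1(K)$ is a critical point of $g$, i.e.\ $g$ is inseparable, completing the proof.
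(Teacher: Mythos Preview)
Your proof is correct and follows essentially the same approach as the paper: identify the rescaling limit $g$ as the reduction $\rho_{\mathbb{L}}(\mathbf{G})$ of the associated map $\mathbf{G}=\mathbf{M}^{-1}\circ\mathbf{f}^q\circ\mathbf{M}$ via Proposition~\ref{ptwise-coeff}, observe that $\mathbf{G}$ is inseparable because $\mathbf{f}$ is, and deduce that $g$ is inseparable. The paper handles the final implication tersely (``considering the coefficients of $g$''), whereas you make it explicit by invoking the (in)separable decomposition $\mathbf{G}=\mathbf{G}_1\circ z^{p^j}$ together with Proposition~\ref{reduction}(3); this is a minor elaboration rather than a different route.
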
 
\begin{proof} 
Let $g$ be a rescaling limit for $\{f_t\}$. Then by Definition \ref{rescaling} and Proposition \ref{ptwise-coeff}, there exist a rescaling $\{M_t\}$, an integer $q\ge 1$ and homogeneous polynomial $H$ such that $M_t^{-1}\circ f_t^q\circ M_t\to Hg$. Note the associated rational map $\mathbf{M}^{-1}\circ\mathbf{f}^q\circ\mathbf{M}$ of $\{M_t^{-1}\circ f_t^q\circ M_t\}$ is inseparable since $\mathbf{f}$ is inseparable. Considering the coefficients of $g$, we have the map $g$ is inseparable.
\end{proof}

\section{Berkovich Dynamics}\label{bd}
In this section, we first summarize the properties of the dynamics on a Berkovich space, see \cite{BR,Ben,Jo,Ri}. Then we restate the results in \cite{Ki}, which are proven for a holomorphic family of rational maps over $\C$. These results are still true for an analytic family $\{f_t(z)\}$ of rational maps over a field $K$ with characteristic zero.\par
Recall that the Berkovich Julia set $J_{\mathrm{Ber}}(\phi)$ of a rational map $\phi:\mathbb{P}_{\mathrm{Ber}}^1(\mathbb{L})\to\mathbb{P}_{\mathrm{Ber}}^1(\mathbb{L})$ is the set consisting of all points $\xi\in\mathbb{P}_{\mathrm{Ber}}^1(\mathbb{L})$ such that $\cup_{n\ge 0}\phi^n(U)$ omits finitely many points of $\mathbb{P}_{\mathrm{Ber}}^1(\mathbb{L})$ for any neighborhood $U$ of $\xi$. The classical Julia set $J_{I}(\phi)$ is $J_{\mathrm{Ber}}(\phi)\cap\mathbb{P}^1(\mathbb{L})$. Let $\xi\in\mathbb{P}_{\mathrm{Ber}}^1(\mathbb{L})\setminus\mathbb{P}^1(\mathbb{L})$ be a periodic point of $\phi$ of period $n\ge 1$. The multiplier $\lambda$ of $\xi$ is defined by the local degree of $\phi^n$ at $\xi$, that is, $\lambda:=\deg_\xi(\phi^n)$. If $\lambda\ge 2$, we say $\xi$ is repelling. If a periodic point $\xi\in\mathbb{P}_{\mathrm{Ber}}^1(\mathbb{L})\setminus\mathbb{P}^1(\mathbb{L})$ is repelling, then $\xi$ is a type II point. Let $\sO\subset\mathbb{P}_{\mathrm{Ber}}^1(\mathbb{L})$ be a $n$-cycle of $\phi$. The basin of $\sO$ is the interior of the set of points $\xi\in\mathbb{P}_{\mathrm{Ber}}^1(\mathbb{L})$ such that, for all neighborhoods $U$ of $\sO$, the orbit of $\xi$ is eventually contained in $U$. \par
Recall that the tangent space $T_\xi\mathbb{P}_{\mathrm{Ber}}^1(\mathbb{L})$ is the set of all directions at $\xi\in\mathbb{P}_{\mathrm{Ber}}^1(\mathbb{L})$. Let $\phi:\mathbb{P}_{\mathrm{Ber}}^1(\mathbb{L})\to\mathbb{P}_{\mathrm{Ber}}^1(\mathbb{L})$ be a rational map. Then for any $\vec{v}\in T_\xi\mathbb{P}_{\mathrm{Ber}}^1(\mathbb{L})$, there is a unique $\vec{w}\in T_{\phi(\xi)}\mathbb{P}_{\mathrm{Ber}}^1(\mathbb{L})$ such that for any $\xi'$ sufficiently near $\xi$, $\phi(\xi')\in\mathbf{B}_{\phi(x)}(\vec{w})^-$. Thus the rational map $\phi$ induces a map 
$$\phi_\ast:T_{\xi}\mathbb{P}_{\mathrm{Ber}}^1(\mathbb{L})\to T_{\phi(\xi)}\mathbb{P}_{\mathrm{Ber}}^1(\mathbb{L})$$ 
sending the direction $\vec{v}$ to the corresponding direction $\vec{w}$.
\begin{proposition}\label{fundamental}\cite[Corollary 9.25, Theorem 9.26, Corollary 9.27, Proposition 9.41]{BR} 
Let $\phi:\mathbb{P}_{\mathrm{Ber}}^1(\mathbb{L})\to\mathbb{P}_{\mathrm{Ber}}^1(\mathbb{L})$ be a rational map of degree at least $1$. Then $\phi(\xi_G)=\xi_G$ if and only if $\deg\rho_{\mathbb{L}}(\phi)\ge 1$. Moreover,\par
\begin{enumerate}
\item Assume $\phi(\xi_G)=\xi_G$. Identifying $T_{\xi_G}\mathbb{P}_{\mathrm{Ber}}^1(\mathbb{L})$ to $\mathbb{P}^1(K)$, the following hold:
\begin{enumerate}
\item $\deg_{\xi_G}\phi=\deg\rho_{\mathbb{L}}(\phi)$,
\item at the Gauss point $\xi_G$, $\phi_\ast=\rho_{\mathbb{L}}(\phi)$ on $\mathbb{P}^1(K)$.
\end{enumerate}
\item For $\xi\in\mathbb{P}_{\mathrm{Ber}}^1(\mathbb{L})$ and $\vec{v}\in T_\xi\mathbb{P}_{\mathrm{Ber}}^1(\mathbb{L})$, the image $\phi(\mathbf{B}_\xi(\vec{v})^-)$ always contains $\mathbf{B}_{\phi(\xi)}(\phi_\ast\vec{v})^-$, and either $\phi(\mathbf{B}_\xi(\vec{v})^-)=\mathbf{B}_{\phi(\xi)}(\phi_\ast\vec{v})^-$ or $\phi(\mathbf{B}_\xi(\vec{v})^-)=\mathbb{P}_{\mathrm{Ber}}^1(\mathbb{L})$. There exists an integer $m\ge 1$ such that
\begin{enumerate}
\item if $\phi(\mathbf{B}_\xi(\vec{v})^-)=\mathbf{B}_{\phi(\xi)}(\phi_\ast(\vec{v}))^-$, then each $\zeta\in\mathbf{B}_{\phi(\xi)}(\phi_\ast(\vec{v}))^-$ has $m$ preimages in $\mathbf{B}_\xi(\vec{v})^-$, counting multiplicities;\par
\item if $\phi(\mathbf{B}_\xi(\vec{v})^-)=\mathbb{P}_{\mathrm{Ber}}^1(\mathbb{L})$, there is an integer $N\ge m$ such that each $\zeta\in\mathbf{B}_{\phi(\xi)}(\phi_\ast(\vec{v}))^-$ has $N$ preimages in $\mathbf{B}_\xi(\vec{v})^-$ and each $\zeta\in\mathbb{P}_{\mathrm{Ber}}^1(\mathbb{L})\setminus\mathbf{B}_{\phi(\xi)}(\phi_\ast(\vec{v}))^-$ has $N-m$ preimages in $\textbf{B}_\xi(\vec{v})^-$, counting multiplicities.
\end{enumerate}
\end{enumerate}
\end{proposition}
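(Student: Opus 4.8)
The plan is to reduce the statement to the local behaviour of $\phi$ at the single point $\xi_G$, where everything is controlled by the reduction $\rho_{\mathbb{L}}(\phi)$ over the residue field $K$. First I would normalize: write $\phi=[P:Q]$ with $P,Q\in\mathcal{O}_\mathbb{L}[X,Y]$ homogeneous of degree $d=\text{deg}\ \phi$, with no common factor and at least one coefficient of absolute value $1$; put $H=\gcd(\widetilde P,\widetilde Q)$, so that $\rho_{\mathbb{L}}(\phi)=[\widetilde P/H:\widetilde Q/H]$, $\text{deg}\ \rho_{\mathbb{L}}(\phi)=d-\text{deg}\ H$, and $\text{deg}\ \rho_{\mathbb{L}}(\phi)=0$ exactly when $\widetilde P$ and $\widetilde Q$ are proportional. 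For the first assertion I would use the defining relation $|g|_{\phi(\xi_G)}=|g\circ\phi|_{\xi_G}$ together with the fact that $|\cdot|_{\xi_G}$ is the Gauss norm. If $\text{deg}\ \rho_{\mathbb{L}}(\phi)\ge 1$, i.e. $\widetilde P$ and $\widetilde Q$ are not proportional, then for every $g=[A:B]$ one has $\widetilde{A(P,Q)}=\widetilde A(\widetilde P,\widetilde Q)\ne 0$ whenever $\widetilde A\ne 0$ (a factor $\beta\widetilde P-\alpha\widetilde Q$ of $\widetilde A(\widetilde P,\widetilde Q)$ would force proportionality), and likewise for $B$; hence $|g\circ\phi|_{\xi_G}=|g|_{\xi_G}$ for all $g$, so $\phi(\xi_G)=\xi_G$. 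Conversely, if $\widetilde P=c\,\widetilde Q$ for a constant $c\in K$ (the case $\rho_{\mathbb{L}}(\phi)\equiv\infty$ is symmetric), lift $c$ to $c'\in\mathcal{O}_\mathbb{L}$; then $P-c'Q$ has all coefficients in $\mathcal{M}_\mathbb{L}$, so $|z-c'|_{\phi(\xi_G)}=|\phi-c'|_{\xi_G}<1$, which places $\phi(\xi_G)$ in a disk of radius $<1$, so $\phi(\xi_G)\neq\xi_G$.

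For part (1), assume $\phi(\xi_G)=\xi_G$. Under the identification $T_{\xi_G}\mathbb{P}_{Ber}^1(\mathbb{L})\cong\mathbb{P}^1(K)$ sending $\vec{v}_x$ to the reduction class $x$, both (a) and (b) say that at the type II fixed point $\xi_G$ the natural ``reduction'' of $\phi$ is $\rho_{\mathbb{L}}(\phi)$. For (b) the key point is that reduction commutes with $\phi$ on type I points: if $z\in\mathbb{P}^1(\mathbb{L})$ reduces to $x\in\mathbb{P}^1(K)$ with $x$ not a zero of $H$, then $\phi(z)$ reduces to $\rho_{\mathbb{L}}(\phi)(x)$ by direct substitution of homogeneous coordinates, and since $\rho_{\mathbb{L}}(\phi)$ is a genuine rational map on $\mathbb{P}^1(K)$ (no base points) the finitely many remaining directions follow by continuity of $\phi$; this is exactly $\phi_\ast\vec{v}_x=\vec{v}_{\rho_{\mathbb{L}}(\phi)(x)}$, i.e. $\phi_\ast=\rho_{\mathbb{L}}(\phi)$ on $\mathbb{P}^1(K)$. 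For (a) one then counts preimages: for a direction $\vec{w}_y$ at $\xi_G$ and a point $\zeta\in\mathbf{B}_{\xi_G}(\vec{w}_y)^-$ near $\xi_G$, the preimages of $\zeta$ accumulating at $\xi_G$ lie in the directions $\vec{v}_x$ with $\rho_{\mathbb{L}}(\phi)(x)=y$, the contribution from $\vec{v}_x$ being the multiplicity $m_{\rho_{\mathbb{L}}(\phi)}(x)$; summing over the fibre gives $\text{deg}_{\xi_G}\phi=\text{deg}\ \rho_{\mathbb{L}}(\phi)$.

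For part (2), I would use that $\phi:\mathbb{P}_{Ber}^1(\mathbb{L})\to\mathbb{P}_{Ber}^1(\mathbb{L})$ is continuous, open, and of constant degree $d$ counted with multiplicity, and that the image of a connected open set is connected and open. The set $\mathbf{B}_\xi(\vec{v})^-$ is connected and open with unique boundary point $\xi$, so its image is connected and open with $\phi(\xi)$ on its boundary; the direction from $\phi(\xi)$ into the image is $\phi_\ast\vec{v}$ by definition of $\phi_\ast$, so the image contains $\mathbf{B}_{\phi(\xi)}(\phi_\ast\vec{v})^-$. For the dichotomy I would conjugate by elements of $PGL_2(\mathbb{L})$ so that $\mathbf{B}_\xi(\vec{v})^-$ and $\mathbf{B}_{\phi(\xi)}(\phi_\ast\vec{v})^-$ both become the standard open disk $D(0,1)$; then $\phi$ restricted to the source disk is represented by a power series whose Newton polygon governs the growth of the maximum modulus, and either the image is again a disk — necessarily $D(0,1)$, since boundary must map to boundary — or $\phi$ attains the value $\infty$ inside, in which case the image is all of $\mathbb{P}_{Ber}^1(\mathbb{L})$. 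In the first case $m$ is the degree of the resulting finite map of disks; in the second, $N$ is the total degree of $\phi$ on $\mathbf{B}_\xi(\vec{v})^-$ and $N-m$ is the number of preimages it keeps of a point outside $\mathbf{B}_{\phi(\xi)}(\phi_\ast\vec{v})^-$.

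The main obstacle is part (2): converting the heuristic ``restrict to a disk and read off the Newton polygon'' into the exact count — the surjectivity dichotomy, the constancy of $m$, and the precise value $N-m$ for points outside the image disk — requires the full local theory of the action of rational maps on Berkovich disks and annuli, the substance of Rivera-Letelier's analysis as developed in \cite{Ri} and \cite{BR}; I would cite those local statements rather than reprove them. A secondary subtlety already appears in part (1): in the degree-drop case, where $\widetilde P$ and $\widetilde Q$ have a nontrivial common factor $H$ but still $\text{deg}\ \rho_{\mathbb{L}}(\phi)\ge 1$, one must check that the ``reduction commutes with $\phi$'' argument survives — it does, because $\rho_{\mathbb{L}}(\phi)$ viewed on $\mathbb{P}^1(K)$ has no base points regardless of $H$, so $\phi_\ast$ is globally well defined on $T_{\xi_G}\mathbb{P}_{Ber}^1(\mathbb{L})$ and equals $\rho_{\mathbb{L}}(\phi)$.
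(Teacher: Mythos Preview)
The paper does not prove this proposition at all: it is stated with an explicit citation to \cite[Corollary 9.25, Theorem 9.26, Corollary 9.27, Proposition 9.41]{BR} and no argument is given. So there is no ``paper's own proof'' to compare against; the paper treats this as background from Baker--Rumely.

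Your sketch is a reasonable outline of how these facts are established in the literature, and you are right that part~(2) is the substantive content and ultimately rests on the Rivera-Letelier/Baker--Rumely local theory of open Berkovich disks and annuli --- exactly what the paper is citing. Two places where your sketch is thinner than it looks: in (1)(b), the ``continuity'' argument for the finitely many directions $\vec v_x$ with $H(x)=0$ is not quite a proof --- $\phi_\ast$ is defined via the tree structure, not via limits of type~I reductions, so one really needs the local expansion of $\phi$ near a lift of $x$ (this is what \cite[Proposition~9.41]{BR} does); and in (2), the phrase ``$N$ is the total degree of $\phi$ on $\mathbf{B}_\xi(\vec v)^-$'' presupposes a well-defined local degree on an open Berkovich ball, which is precisely the nontrivial input from \cite{BR}. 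Since the paper itself only cites these results, your decision to cite them for part~(2) matches the paper exactly; for the equivalence $\phi(\xi_G)=\xi_G\Leftrightarrow\deg\rho_{\mathbb L}(\phi)\ge 1$ and for (1)(a), your direct Gauss-norm computation is the standard one and is correct.
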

Based on Proposition \ref{coeff-ptwise} and Proposition \ref{fundamental}, we have 
\begin{proposition}\label{main-prop}\cite[Proposition 3.4, Lemma 3.6, Lemma 3.7]{Ki} 
Let $\{f_t(z)\}\subset K(z)$ be an analytic family of rational maps of nontrivial degree at least $2$, and let $\{M_t\}$ and $\{L_t\}$ be moving frames. Denote by $\mathbf{f}$, $\mathbf{M}$ and $\mathbf{L}$ the associated rational maps. Then\par
\begin{enumerate}
\item  For all $l\ge 1$, the following are equivalent:
\begin{enumerate}
\item There exists a rational map $g:\mathbb{P}^1(K)\to\mathbb{P}^1(K)$ of degree at least $d\ge 1$ such that $M_t^{-1}\circ f_t^l\circ M_t$ converges to $g$ pointwise, as $t\to 0$, on $\mathbb{P}^1(K)$ off a finite subset.
\item $\mathbf{f}^l(\xi)=\xi$, where $\xi=\mathbf{M}(\xi_G)$ and $\text{deg}_\xi\mathbf{f}^l=d$.
\end{enumerate}
In the case in which $(a)$ and $(b)$ hold, the map $(\mathbf{f}^l)_\ast:T_\xi\mathbb{P}_{\mathrm{Ber}}^1(\mathbb{L})\to T_\xi\mathbb{P}_{\mathrm{Ber}}^1(\mathbb{L})$ is conjugate via a $M\in\mathrm{Rat}_1(K)$ to $g:\mathbb{P}^1(K)\to\mathbb{P}^1(K)$.
\item Moving frames $\{M_t\}$ and $\{L_t\}$ are equivalent if and only if $\mathbf{M}(\xi_G)=\mathbf{L}(\xi_G)$.
\item The following are equivalent:
\begin{enumerate}
\item $\mathbf{f}\circ\mathbf{M}(\xi_G)=\mathbf{L}(\xi_G)$.
\item As $t\to 0$, $L_t^{-1}\circ f_t\circ M_t$ converges to some nonconstant rational map $g:\mathbb{P}^1(K)\to\mathbb{P}^1(K)$ pointwise outside some finite subset. 
\end{enumerate}
\end{enumerate}
\end{proposition}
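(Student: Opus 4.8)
The plan is to follow Kiwi's arguments for \cite[Proposition 3.4, Lemma 3.6, Lemma 3.7]{Ki} almost verbatim, the only substantive change being that his locally uniform convergence over $\C$ is everywhere replaced by pointwise convergence off a finite set, for which Proposition \ref{coeff-ptwise} and Proposition \ref{ptwise-coeff} are the exact substitutes. The one auxiliary fact I would record first is that the rational map associated to a composed analytic family is the composition of the associated maps: $\{M_t^{-1}\circ f_t^l\circ M_t\}$ has associated map $\mathbf{M}^{-1}\circ\mathbf{f}^l\circ\mathbf{M}$, $\{M_t^{-1}\circ L_t\}$ has associated map $\mathbf{M}^{-1}\circ\mathbf{L}$, and $\{L_t^{-1}\circ f_t\circ M_t\}$ has associated map $\mathbf{L}^{-1}\circ\mathbf{f}\circ\mathbf{M}$. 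This is routine: for $t\ne 0$ the naive homogeneous composite is already a reduced representative, since composing coprime homogeneous pairs over a field keeps them coprime, its coefficients are polynomial expressions in the coefficients of the factors, and sending $t$ to its power series is a ring homomorphism into $\mathbb{L}$; along the way one only checks that a composite of analytic families is analytic, which follows from convergence of composites of convergent power series after shrinking the parameter disk.

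For part (1): by Proposition \ref{ptwise-coeff} and Proposition \ref{coeff-ptwise}, $M_t^{-1}\circ f_t^l\circ M_t$ converges pointwise off a finite subset to a rational map $g$ exactly when $g=\rho_{\mathbb{L}}(\mathbf{M}^{-1}\circ\mathbf{f}^l\circ\mathbf{M})$, and then $\deg g=\deg\rho_{\mathbb{L}}(\mathbf{M}^{-1}\circ\mathbf{f}^l\circ\mathbf{M})$; so the existence of such a $g$ of degree $d\ge 1$ is equivalent to $\deg\rho_{\mathbb{L}}(\mathbf{M}^{-1}\circ\mathbf{f}^l\circ\mathbf{M})=d$. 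By Proposition \ref{fundamental} this holds if and only if $\mathbf{M}^{-1}\circ\mathbf{f}^l\circ\mathbf{M}$ fixes $\xi_G$, i.e. $\mathbf{f}^l$ fixes $\xi:=\mathbf{M}(\xi_G)$, and in that case $\deg\rho_{\mathbb{L}}(\mathbf{M}^{-1}\circ\mathbf{f}^l\circ\mathbf{M})=\deg_{\xi_G}(\mathbf{M}^{-1}\circ\mathbf{f}^l\circ\mathbf{M})=\deg_\xi\mathbf{f}^l$, the last equality because $\mathbf{M}$ is a M\"obius transformation and so acts on $\mathbb{P}_{Ber}^1(\mathbb{L})$ with local degree $1$ at every point. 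This is (a) $\iff$ (b). For the conjugacy assertion, Proposition \ref{fundamental}(1)(b) identifies the tangent map of $\mathbf{M}^{-1}\circ\mathbf{f}^l\circ\mathbf{M}$ at $\xi_G$ with $\rho_{\mathbb{L}}(\mathbf{M}^{-1}\circ\mathbf{f}^l\circ\mathbf{M})=g$ under $T_{\xi_G}\mathbb{P}_{Ber}^1(\mathbb{L})\cong\mathbb{P}^1(K)$; since tangent maps compose functorially, $(\mathbf{M}^{-1}\circ\mathbf{f}^l\circ\mathbf{M})_\ast=(\mathbf{M}_\ast)^{-1}\circ(\mathbf{f}^l)_\ast\circ\mathbf{M}_\ast$, so $(\mathbf{f}^l)_\ast$ on $T_\xi\mathbb{P}_{Ber}^1(\mathbb{L})$ is conjugate to $g$ by $\mathbf{M}_\ast$, which under the identifications of $T_{\xi_G}$ and $T_\xi$ with $\mathbb{P}^1(K)$ is an element of $Rat_1(K)$.

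Parts (2) and (3) are the same dictionary applied to a composition length rather than a power. For (2): $\{M_t\}$ and $\{L_t\}$ are equivalent exactly when $M_t^{-1}\circ L_t$ converges to a degree-$1$ map, which by Propositions \ref{coeff-ptwise}, \ref{ptwise-coeff} and \ref{fundamental} happens iff $\deg\rho_{\mathbb{L}}(\mathbf{M}^{-1}\circ\mathbf{L})\ge 1$ (a M\"obius reduction has degree $0$ or $1$), iff $\mathbf{M}^{-1}\circ\mathbf{L}$ fixes $\xi_G$, iff $\mathbf{M}(\xi_G)=\mathbf{L}(\xi_G)$. For (3): $\{L_t^{-1}\circ f_t\circ M_t\}$ converges pointwise off a finite set to a nonconstant rational map iff $\deg\rho_{\mathbb{L}}(\mathbf{L}^{-1}\circ\mathbf{f}\circ\mathbf{M})\ge 1$, iff $\mathbf{L}^{-1}\circ\mathbf{f}\circ\mathbf{M}$ fixes $\xi_G$, iff $\mathbf{f}\circ\mathbf{M}(\xi_G)=\mathbf{L}(\xi_G)$.

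The hard part is really only the assembly: the one genuinely non-Archimedean input is the equivalence between pointwise convergence and coefficient convergence in the presence of indeterminacy points of the limit, which is precisely Propositions \ref{coeff-ptwise}--\ref{ptwise-coeff}, and the remaining care is in tracking which local degree sits at which point under the M\"obius conjugations and in distinguishing ``reduction nonconstant'' from ``reduction of the prescribed degree $d$'' consistently with the statement; I expect no conceptual difficulty beyond what Propositions \ref{coeff-ptwise}, \ref{ptwise-coeff}, and \ref{fundamental} already provide.
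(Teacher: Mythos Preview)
Your proposal is correct and matches the paper's own treatment: the paper does not give a detailed proof of Proposition \ref{main-prop} but simply cites \cite[Proposition 3.4, Lemma 3.6, Lemma 3.7]{Ki} and prefaces it with ``Based on Proposition \ref{coeff-ptwise} and Proposition \ref{fundamental}, we have'', which is exactly the assembly you describe. Your explicit unpacking---identifying the associated map of a composed family with the composite, invoking Propositions \ref{coeff-ptwise}--\ref{ptwise-coeff} to pass between pointwise and coefficient convergence, and reading off the Gauss-point fixed-point and local-degree criteria from Proposition \ref{fundamental}---is precisely the intended argument.
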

\begin{corollary}
Let $\{f_t\}\subset K(z)$ be an analytic family of degree at least $2$ rational maps. Suppose $\{f_t\}$ has (potentially) good reduction. Then there is at most one rescaling, up to equivalence, for $\{f_t\}$, and this rescaling is of period $1$.
\end{corollary}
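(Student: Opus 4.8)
The plan is to reduce to the case of good reduction, then use Proposition~\ref{main-prop} to turn rescalings (up to equivalence) into periodic type~II points of the associated map $\mathbf{f}$ whose cyclic iterate has local degree at least $2$, and finally to show that for a good reduction map the only such point is the Gauss point, which automatically has period $1$.

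First I would pass to a conjugate family. By hypothesis there is a moving frame $\{M_t^0\}$ with $\{(M_t^0)^{-1}\circ f_t\circ M_t^0\}$ of good reduction; replacing $\{f_t\}$ by the good reduction family $\{g_t\}=\{(M_t^0)^{-1}\circ f_t\circ M_t^0\}$ --- whose rescalings correspond to those of $\{f_t\}$ via $\{N_t\}\mapsto\{M_t^0\circ N_t\}$, compatibly with the equivalence relation and the period --- I may assume $\deg\rho_\mathbb{L}(\mathbf{f})=d$. By Proposition~\ref{fundamental} this gives $\mathbf{f}(\xi_G)=\xi_G$ and $\deg_{\xi_G}\mathbf{f}=d$, hence (as $\deg_{\xi_G}\mathbf{f}^n=d^n=\deg\mathbf{f}^n$) total invariance of the Gauss point: $\mathbf{f}^{-n}(\xi_G)=\{\xi_G\}$ for all $n\ge1$.

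Next, by Proposition~\ref{main-prop}(1),(2), a moving frame $\{M_t\}$ with associated point $\xi=\mathbf{M}(\xi_G)$ is a rescaling of period dividing $q$ exactly when $\mathbf{f}^q(\xi)=\xi$ and $\deg_\xi\mathbf{f}^q\ge2$ (a rescaling limit has nontrivial degree $\ge2$, hence degree $\ge2$, and this degree equals $\deg_\xi\mathbf{f}^q$), and two rescalings are equivalent iff they have the same associated point. Since $\deg_{\xi_G}\mathbf{f}=d\ge2$, the trivial frame is a rescaling of period $1$ with associated point $\xi_G$. So the Corollary reduces to the claim that $\xi_G$ is the only type~II point $\eta$ with $\mathbf{f}^q(\eta)=\eta$ and $\deg_\eta\mathbf{f}^q\ge2$ for some $q\ge1$. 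To prove this I would assume $\eta\neq\xi_G$ is such a point and, passing to $\psi:=\mathbf{f}^q$ (again of good reduction, of degree $d^q\ge2$), take $\psi(\eta)=\eta$ and $\deg_\eta\psi\ge2$. Let $\vec v^*$ be the direction at $\eta$ containing $\xi_G$. Since $\psi([\eta,\xi_G])$ is a path from $\eta$ to $\xi_G$, it contains $[\eta,\xi_G]$, which forces $\psi_*\vec v^*=\vec v^*$; and no direction $\vec v'\neq\vec v^*$ at $\eta$ can map to $\vec v^*$, for otherwise Proposition~\ref{fundamental}(2) would put a preimage of $\xi_G$ inside $\mathbf{B}_\eta(\vec v')^-$, against total invariance. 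Hence $\deg_\eta\psi$ equals the local degree of $\psi$ along the annulus at $\eta$ pointing toward $\xi_G$.

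The remaining step --- showing that this directional local degree is $1$, so that $\deg_\eta\psi=1$ contradicts $\deg_\eta\psi\ge2$ --- is where the real content lies, and I would settle it by invoking the structure of good reduction dynamics on $\mathbb{P}^1_{Ber}(\mathbb{L})$: a rational map of good reduction and degree $\ge2$ has Berkovich Julia set $\{\xi_G\}$, while a type~II periodic point whose cyclic iterate has local degree $\ge2$ is repelling and therefore lies in the Julia set \cite{Ben,BR,Ri}; this forces $\eta=\xi_G$. (One can make part of this explicit in the good reduction case: combining the preimage count of Proposition~\ref{fundamental}(2) with total invariance shows $\psi$ carries the closed disk cut off by $\eta$ away from $\xi_G$ into itself as a self-map of degree equal to the directional local degree, and if that degree is $\deg\psi$ then $\eta$ is totally invariant, so $\{\xi_G,\eta\}$ are two totally invariant type~II points and $\psi$ is conjugate to a monomial, whose totally invariant points are type~I --- a contradiction; the general case still needs the classification above.) Granting $\eta=\xi_G$, every rescaling has associated point $\xi_G$, so up to equivalence there is at most one, and because $\mathbf{f}(\xi_G)=\xi_G$ with $\deg_{\xi_G}\mathbf{f}\ge2$ its period is $1$. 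The step I expect to be the main obstacle is exactly this last one: the formal inputs pin a rescaling's associated point to a periodic type~II point that is expanding toward $\xi_G$, but excluding every such point other than $\xi_G$ genuinely uses that repelling cycles sit inside the one-point Julia set of a good reduction map.
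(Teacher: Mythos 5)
Your proposal is correct and follows essentially the same route as the paper: reduce to good reduction, use Proposition \ref{main-prop} to identify rescalings (up to equivalence) with type II periodic points at which the cyclic iterate has local degree at least $2$, and then exclude all such points other than $\xi_G$ by combining the one-point Berkovich Julia set of a good reduction map with the fact that repelling periodic points lie in the Julia set. The extra material on total invariance and directional local degrees is not needed, since the Julia-set argument you fall back on is exactly the paper's proof.
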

\begin{proof}
Let $\mathbf{f}$ be the associated rational map of $\{f_t\}$. Then $\mathbf{f}$ has (potentially) good reduction. Then the classical Julia set $J_I(\mathbf{f})=\emptyset$ and the Berkovich Julia set $J_{\mathrm{Ber}}(\mathbf{f})$ is a singleton set \cite[Lemma 10.53]{BR}. Thus $\phi$ has no repelling periodic points of type I and has only one repelling periodic point \cite[Theorems 10.81,10.82]{BR}. By Proposition \ref{main-prop}, all the rescalings of $\{f_t\}$ are equivalent and they are of period $1$.
\end{proof} 
To relate the critical points of $\mathbf{f}$ and the rescaling limits of $\{f_t\}$, we first state the following non-Archimedean Rolle's theorem:
\begin{lemma}\label{rolle}\cite[Application 1]{Fa2}
Suppose $K$ has characteristic zero and residue characteristic zero. Let $\phi\in K(z)$ be a rational map of degree at least $1$. If $\phi$ has two distinct zeros in the closed disk $\overline{D}(a,r)$, then it has a critical point in $\overline{D}(a,r)$.
\end{lemma}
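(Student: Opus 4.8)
The plan is to pass to the Berkovich projective line $\mathbb{P}_{Ber}^1(K)$ and read off the critical point from the geometry of the ramification locus. A few reductions first. The case $\text{deg}\ \phi=1$ is vacuous, since a M\"obius map has only one zero, and in characteristic zero every rational map is separable, so no inseparability subtleties arise. Translating the coordinate, I may assume $a=0$ is one of the two zeros; writing $\zeta$ for the other one and $\rho:=|\zeta|_K\le r$, it suffices to produce a critical point of $\phi$ inside the smaller disk $\overline{D}(0,\rho)$. If $\phi$ vanishes to order $\ge 2$ at $0$ or at $\zeta$, that point is already a critical point, so from now on both zeros are simple.

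Consider the geodesic segment $I=[0,\zeta]$ in $\mathbb{P}_{Ber}^1(K)$ joining the two zeros. Because $\rho=|\zeta|_K$, this segment runs from $0$ up to the type II point $\xi_{0,\rho}$ and back down to $\zeta$, so every point of $I$ satisfies $|z|_\xi\le\rho$ in the affine chart; in particular $I$ lies in the Berkovich closed disk of radius $\rho$ about $0$, whose type-I points are precisely $\overline{D}(0,\rho)$. Now $\phi$ is nonconstant with finite fibres and $\phi(0)=\phi(\zeta)=0$, so $\phi|_I$ is a nonconstant path both of whose endpoints equal $0$; such a path cannot be injective, and at a point $\xi^\ast\in I$ where its image ``turns around'' the map $\phi$ fails to be locally injective, i.e. $\text{deg}_{\xi^\ast}\phi\ge 2$ (a standard property of the action of a rational map on the Berkovich line, cf. \cite{BR} and Proposition \ref{fundamental}). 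Note $\xi^\ast$ is neither endpoint of $I$, since a simple zero is a point of local degree $1$; thus $\xi^\ast$ is an interior point of $I$, hence of type II or III, and it lies in $\sR_\phi$ and in the Berkovich closed disk of radius $\rho$ about $0$.

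To conclude I invoke Faber's description of the ramification locus in residue characteristic zero: $\sR_\phi$ is the convex hull in $\mathbb{P}_{Ber}^1(K)$ of the critical points of $\phi$. Hence $\xi^\ast$ lies on a geodesic joining two critical points $c_1,c_2$, so $c_1$ and $c_2$ lie in different directions at $\xi^\ast$. Since $\xi^\ast$ lies in the Berkovich closed disk of radius $\rho$ about $0$, at most one direction at $\xi^\ast$ points outside that disk, so at least one of $c_1,c_2$ is a type-I point of $\overline{D}(0,\rho)\subseteq\overline{D}(a,r)$, which is the critical point we want.

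The step I expect to be the main obstacle is precisely Faber's theorem that in residue characteristic zero $\sR_\phi$ has empty interior and equals the convex hull of the critical points; this is exactly the ingredient that fails in positive residue characteristic — already $\phi(z)=z^p$ has $\sR_\phi=\mathbb{P}_{Ber}^1(K)$ — which is why the positive-characteristic counterpart, Lemma \ref{rolle-positive}, must impose tameness. If one prefers to keep the argument elementary in the polynomial case, it can be done by a direct Newton-polygon computation: writing $f(z)=z\,g(z)$ with $g(0)\ne 0$ and $g(\zeta)=0$, one has $f'(z)=g(z)+z\,g'(z)$, whose $n$-th Taylor coefficient is $(n+1)$ times the $n$-th coefficient of $g$; since $|n+1|_K=1$ in residue characteristic zero, $f'$ and $g$ have the same Newton polygon over $\overline{D}(0,\rho)$ and hence the same number of roots there, so $f'$ inherits a root from the root of $g$ at $\zeta$. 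The general rational case then reduces to the same kind of estimate for the Wronskian $W=P'Q-PQ'$, where the delicate point — ruling out accidental cancellation between the numerator and denominator contributions — again relies on nonzero integers being units.
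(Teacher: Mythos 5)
The paper does not actually prove this lemma; it is imported verbatim from Faber \cite[Application 1]{Fa2}, so the closest in-paper comparison is the proof of the positive-characteristic analogue, Lemma \ref{rolle-positive}. Your argument is correct and follows essentially that same route: non-injectivity forces the segment $[0,\zeta]$ (equivalently, the Berkovich disk over $\overline{D}(a,r)$) to meet $\sR_\phi$, and the containment $\sR_\phi\subseteq Hull(Crit(\phi))$ --- which in residue characteristic zero follows from tameness of every rational map \cite[Corollaries 6.6 and 7.13]{Fa1} --- together with the observation that at most one direction at such a point leaves the disk, produces a classical critical point in $\overline{D}(a,r)$. Two cosmetic remarks: the ``turning around'' step is more cleanly phrased as in the proof of Lemma \ref{rolle-positive} (if $[0,\zeta]\cap\sR_\phi=\emptyset$, then since $\sR_\phi$ is closed the segment lies in a ball disjoint from $\sR_\phi$, on which $\phi$ is injective by \cite[Corollary 3.8]{Fa1}, contradicting $\phi(0)=\phi(\zeta)=0$); and you only need the inclusion of $\sR_\phi$ in the hull of the critical points, not the asserted equality. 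The Newton-polygon computation is a correct, genuinely elementary alternative in the polynomial case, though its extension to rational maps via the Wronskian is left as a sketch.
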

We should mention here Lemma \ref{rolle} is not true in general. If $K$ has characteristic zero and residue characteristic $p>0$, then under same assumptions, $\phi$ is only guaranteed to have a critical point in $\overline{D}(a,r|p|_K^{-1/(p-1)})$ which is strictly larger than $\overline{D}(a,r)$. If $K$ has characteristic $p>0$, consider the field $\mathbb{L}$ and $\phi(z)=z^p-z\in\mathbb{L}(z)$. Then $\phi$ has only one critical point, which is $\infty\in\mathbb{P}^1(\mathbb{L})$. However,  $\phi$ has $p$ zeros in $\overline{D}(0,1)\subset\mathbb{L}$. For more details about rational maps with one critical point, we refer  \cite{Fa0}.\par
Applying the non-Archimedean Rolle's theorem and using the same proof in \cite{Ki}, we have
\begin{proposition}\label{inj-crit-0}
Suppose $K$ has characteristic zero. Consider a rational map $\phi:\mathbb{P}_{\mathrm{Ber}}^1(\mathbb{L})\to\mathbb{P}_{\mathrm{Ber}}^1(\mathbb{L})$ of degree at least $2$. Let $\xi\in\mathbb{P}_{\mathrm{Ber}}^1(\mathbb{L})$ be a type II point and let $\vec{v}\in T_\xi\mathbb{P}_{\mathrm{Ber}}^1(\mathbb{L})$. If $\phi$ is not injective on $\mathbf{B}_\xi(\vec{v})^-$, then there is a critical point of $\phi$ in $\mathbf{B}_\xi(\vec{v})^-$ such that the corresponding critical value $\phi(c)\in\mathbf{B}_{\phi(\xi)}(\phi_\ast(\vec{v}))^-$.
\end{proposition}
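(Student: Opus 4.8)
The plan is to follow Kiwi's argument for the corresponding statement over $\mathbb{C}$, substituting the non-Archimedean Rolle's theorem (Lemma \ref{rolle}) for the Riemann--Hurwitz count on a disc.

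First I would normalize. Since $\xi$ is type II and $\text{deg}\,\phi\ge2$, the image $\phi(\xi)$ is also type II, so I can pick Möbius transformations $\gamma_1,\gamma_2\in PGL_2(\mathbb{L})$ of good reduction with $\gamma_1(\xi_G)=\xi$ and $\gamma_2(\xi_G)=\phi(\xi)$, and — after composing with further good-reduction Möbius maps fixing $\xi_G$, which act on $T_{\xi_G}\mathbb{P}_{Ber}^1(\mathbb{L})\cong\mathbb{P}^1(K)$ through $PGL_2(K)$ — arrange that the two distinguished directions both correspond to $0\in\mathbb{P}^1(K)$. Replacing $\phi$ by $\gamma_2^{-1}\circ\phi\circ\gamma_1$ (an operation that does not affect the conclusion, as the $\gamma_i$ are automorphisms) I may then assume $\xi=\xi_G$, $\phi(\xi_G)=\xi_G$, and that $\mathbf{B}:=\mathbf{B}_{\xi_G}(\vec{v})^-$ and $\mathbf{B}':=\mathbf{B}_{\phi(\xi_G)}(\phi_\ast\vec{v})^-$ both meet $\mathbb{P}^1(\mathbb{L})$ in the open unit disc $D(0,1)$; by Proposition \ref{fundamental}, $\text{deg}\,\rho_{\mathbb{L}}(\phi)\ge1$.

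Next I would produce two preimages and apply Rolle. By Proposition \ref{fundamental}(2), non-injectivity of $\phi$ on $\mathbf{B}$ forces $m\ge2$ when $\phi(\mathbf{B})=\mathbf{B}'$ and $N\ge2$ when $\phi(\mathbf{B})=\mathbb{P}_{Ber}^1(\mathbb{L})$; in either case a generic point of $\mathbf{B}'$ has at least two preimages in $\mathbf{B}$. Since the preimages of a type I point are type I and $\phi$ has finitely many critical values, I can choose $w_0\in\mathbf{B}'\cap\mathbb{P}^1(\mathbb{L})$ with two distinct type I preimages $z_1\ne z_2$ in $D(0,1)$. Then $\psi:=\phi-w_0\in\mathbb{L}(z)$ has the distinct zeros $z_1,z_2$ in the closed disc $\overline{D}(z_1,|z_1-z_2|)\subset D(0,1)$. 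Since $\mathbb{L}$ has characteristic zero and residue field $K$ of characteristic zero, Lemma \ref{rolle} applies to $\psi$ over $\mathbb{L}$ and produces a critical point $c$ of $\psi$, hence of $\phi$ (as $\psi'=\phi'$), with $c\in\overline{D}(z_1,|z_1-z_2|)\subset\mathbf{B}$.

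The remaining step, which I expect to be the main obstacle, is to arrange $\phi(c)\in\mathbf{B}'$. If $\phi(\mathbf{B})=\mathbf{B}'$ this is automatic. Otherwise $\phi^{-1}(\mathbf{B}')\cap\mathbf{B}$ is a finite disjoint union of Berkovich balls, each mapped onto $\mathbf{B}'$ by $\phi$, with local degrees summing to $N\ge2$. If one of these balls $\mathbf{B}^{(i)}$ has degree $\ge2$, I would choose $w_0$ and $z_1,z_2$ inside $\mathbf{B}^{(i)}$; then $\overline{D}(z_1,|z_1-z_2|)\subset\mathbf{B}^{(i)}$ and $\phi(c)\in\phi(\mathbf{B}^{(i)})=\mathbf{B}'$. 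If instead every such ball has degree $1$ — so there are at least two of them, $\mathbf{B}^{(1)}$ and $\mathbf{B}^{(2)}$, with boundary points $\eta_1,\eta_2\in\mathbf{B}$ and $\phi(\eta_j)=\phi(\xi_G)$ — then the image of the segment $[\xi_G,\eta_1]$ leaves $\phi(\xi_G)$ in the direction $\phi_\ast\vec{v}$ into $\mathbf{B}'$ and returns to $\phi(\xi_G)$, so it folds at a type II point $c^\ast\in\mathbf{B}$ with $\phi(c^\ast)\in\mathbf{B}'$ and $\text{deg}_{c^\ast}\phi\ge2$. By Proposition \ref{fundamental}, under the identifications $T_{c^\ast}\mathbb{P}_{Ber}^1(\mathbb{L})\cong\mathbb{P}^1(K)\cong T_{\phi(c^\ast)}\mathbb{P}_{Ber}^1(\mathbb{L})$ the tangent map $\phi_\ast$ is a rational map of $\mathbb{P}^1(K)$ of degree $\text{deg}_{c^\ast}\phi\ge2$; since $K$ has characteristic zero it has at least two distinct critical values, so I can pick a critical direction $\vec{u}$ at $c^\ast$ with $\phi_\ast\vec{u}$ pointing into $\mathbf{B}'$ and $\mathbf{B}_{c^\ast}(\vec{u})^-\subset\mathbf{B}$. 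The multiplicity attached to $\mathbf{B}_{c^\ast}(\vec{u})^-$ in Proposition \ref{fundamental}(2) is then $\ge2$, and I would repeat the preimage-and-Rolle step inside $\mathbf{B}_{c^\ast}(\vec{u})^-$ with target ball $\mathbf{B}_{\phi(c^\ast)}(\phi_\ast\vec{u})^-\subset\mathbf{B}'$; because the relevant local degree strictly drops, this recursion terminates and yields the desired critical point $c$ with $\phi(c)\in\mathbf{B}'$. This bookkeeping — keeping the disc on which Lemma \ref{rolle} is applied inside a ball mapping into $\mathbf{B}'$ — is handled exactly as in \cite{Ki}.
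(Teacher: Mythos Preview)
Your proposal is correct and takes essentially the same approach as the paper. The paper does not write out a proof of Proposition~\ref{inj-crit-0} at all: it simply says ``Applying the non-Archimedean Rolle's theorem and using the same proof in \cite{Ki}'', and your proposal is exactly a spelling-out of Kiwi's argument with Lemma~\ref{rolle} (applied over $\mathbb{L}$, whose residue field $K$ has characteristic zero) substituted in at the appropriate step.
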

\begin{proposition}\label{crit-fin-0}
Suppose $K$ has characteristic zero. Consider a rational map $\phi:\mathbb{P}_{\mathrm{Ber}}^1(\mathbb{L})\to\mathbb{P}_{\mathrm{Ber}}^1(\mathbb{L})$ of degree at least $2$. Let $\sO$ be a type II periodic orbit of period $q\ge 1$ of $\phi$. Assume the basin of $\sO$ is free of critical points of $\phi$. Then, for all $\xi\in\sO$, every $\vec{v}\in T_\xi\mathbb{P}_{\mathrm{Ber}}^1(\mathbb{L})$ with $\phi^q(\mathbf{B}_\xi(\vec{v})^-)=\mathbb{P}_{\mathrm{Ber}}^1(\mathbb{L})$  has a finite forward orbit under $(\phi^q)_\ast$. Moreover, if $\deg (\phi^q)_\ast\ge 2$, then $(\phi^q)_\ast$ is postcritically finite.
\end{proposition}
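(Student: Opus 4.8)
The plan is to follow Kiwi's proof, using the non-Archimedean Rolle theorem (Lemma~\ref{rolle}, through Proposition~\ref{inj-crit-0}) in place of his complex-analytic input, together with the basic structure of Berkovich dynamics recorded in Proposition~\ref{fundamental}. Since the first-return maps of $\phi^q$ at the points of $\sP$ are mutually conjugate, it suffices to fix one $\xi\in\sP$. Identify $T_\xi\mathbb{P}_{Ber}^1(\mathbb{L})$ with $\mathbb{P}^1(K)$ (the residue field of $\mathbb{L}$ is $K$, of characteristic zero) and write $R:=(\phi^q)_\ast$; conjugating $\phi^q$ to fix the Gauss point and applying Proposition~\ref{fundamental}(1), $R$ is a rational map of $\mathbb{P}^1(K)$ with $\text{deg}\,R=\text{deg}_\xi\phi^q=:D$. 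First I would record the good/bad dichotomy: iterating Proposition~\ref{fundamental}(2) around the cycle $\xi\to\phi(\xi)\to\cdots\to\phi^q(\xi)=\xi$ shows that, for each direction $\vec v$ at $\xi$, the set $\phi^q(\mathbf{B}_\xi(\vec v)^-)$ is either the ball $\mathbf{B}_\xi(R\vec v)^-$ (call $\vec v$ \emph{good}) or all of $\mathbb{P}_{Ber}^1(\mathbb{L})$ (call $\vec v$ \emph{bad}, in which case $\phi^q$ is not injective on $\mathbf{B}_\xi(\vec v)^-$), and that $\phi^{qn}(\mathbf{B}_\xi(\vec v)^-)=\mathbf{B}_\xi(R^n\vec v)^-$ whenever $\vec v,R\vec v,\dots,R^{n-1}\vec v$ are all good. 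I also use that, since $\text{char}\,K=0$, the critical set $Crit(\phi^q)=\bigcup_{i=0}^{q-1}\phi^{-i}(Crit(\phi))$ is finite, every critical point is a classical point, and $R$ has $2D-2$ critical points in $\mathbb{P}^1(K)$ when $D\ge 2$.

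The engine of the proof is the following observation: \emph{if a classical point $c$ lies in a component $\mathbf{B}_\xi(\vec u)^-$ such that $\vec u$ and all its forward iterates $R^n\vec u$ $(n\ge0)$ are good and $\{R^n\vec u:n\ge0\}$ is infinite, then $c$ lies in the basin of $\sP$.} Indeed, goodness gives $\phi^{qn}(\mathbf{B}_\xi(\vec u)^-)=\mathbf{B}_\xi(R^n\vec u)^-$ for all $n$, so a small open disk $D$ about $c$ contained in $\mathbf{B}_\xi(\vec u)^-$ satisfies $\phi^{qn}(D)\subseteq\mathbf{B}_\xi(R^n\vec u)^-$; an infinite forward $R$-orbit is not eventually periodic, so each direction recurs only finitely often and the sets $\phi^{qn}(D)$ meet each component $\mathbf{B}_\xi(\vec v)^-$ only finitely often; since every neighbourhood of $\xi$ contains one of the form $\mathbb{P}_{Ber}^1(\mathbb{L})\setminus\bigcup_i\mathbf{B}_\xi(\vec v_i)^-$ with finitely many $\vec v_i$, the iterates $\phi^{qn}(D)$ eventually lie inside every neighbourhood of $\xi$. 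Hence every point of $D$ has $\phi$-orbit converging to $\sP$, so $D\subseteq$ basin and $c$ lies in the basin.

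Both assertions now follow by producing a critical point of $\phi$ in the basin. \emph{Finiteness of the orbit of a bad direction:} suppose a bad direction $\vec v$ has infinite $R$-orbit. If infinitely many iterates $R^n\vec v$ are bad, then applying Proposition~\ref{inj-crit-0} to the non-injective map $\phi^q$ on the pairwise disjoint balls $\mathbf{B}_\xi(R^n\vec v)^-$ gives infinitely many distinct critical points of $\phi^q$, contradicting finiteness of $Crit(\phi^q)$. Otherwise let $\vec u=R^{n_0}\vec v$ be the last bad iterate, so $R\vec u,R^2\vec u,\dots$ are good with infinite orbit; Proposition~\ref{inj-crit-0} gives a critical point $c$ of $\phi^q$ in $\mathbf{B}_\xi(\vec u)^-$ with $\phi^q(c)\in\mathbf{B}_\xi(R\vec u)^-$, and, applying the observation to $\phi^q(c)$ and the direction $R\vec u$, $\phi^q(c)$ lies in the basin; since the basin is open and forward invariant and a point whose $\phi^q$-image lies in it lies in it itself, so does $c$, hence so does $\phi^i(c)\in Crit(\phi)$ for the appropriate $i\in\{0,\dots,q-1\}$ — contradicting the hypothesis. \emph{Postcritical finiteness when $D\ge2$:} if $R$ were not postcritically finite, a critical point $\vec w$ of $R$ would have infinite $R$-orbit, so by the previous paragraph $\vec w,R\vec w,\dots$ are all good. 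As $\vec w$ is critical for $R=(\phi^q)_\ast$, the degree with which $\phi^q$ maps $\mathbf{B}_\xi(\vec w)^-$ onto $\mathbf{B}_\xi(R\vec w)^-$ is at least the local degree of $R$ at $\vec w$, hence at least $2$; thus $\phi^q$ is not injective on $\mathbf{B}_\xi(\vec w)^-$ and Proposition~\ref{inj-crit-0} yields a critical point $c$ of $\phi^q$ there with $\phi^q(c)\in\mathbf{B}_\xi(R\vec w)^-$, which by the observation lies in the basin, together with the iterate $\phi^i(c)\in Crit(\phi)$ — a contradiction. Hence $(\phi^q)_\ast$ is postcritically finite.

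The step I expect to be the main obstacle is the claim used in the last paragraph that the degree with which $\phi^q$ maps the tangent ball $\mathbf{B}_\xi(\vec w)^-$ onto $\mathbf{B}_\xi(R\vec w)^-$ is at least the local degree of $R=(\phi^q)_\ast$ at $\vec w$; establishing this rests on the compatibility of reduction (tangent) maps along geodesic segments of $\mathbb{P}_{Ber}^1(\mathbb{L})$, the point where the finer local theory of Berkovich dynamics near type II points must be combined with Proposition~\ref{fundamental}, exactly as in \cite{Ki}. The remaining steps are formal or direct consequences of the non-Archimedean Rolle theorem.
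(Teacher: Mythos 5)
Your proof is correct and follows essentially the same route as the paper, which does not write out this characteristic-zero case explicitly (it defers to Kiwi) but proves the positive-characteristic analogues (Proposition \ref{crit-fin} and Corollary \ref{crit-fin-rl}) by exactly the mechanism you use: the good/bad dichotomy from Proposition \ref{fundamental}(2), the Rolle-type input of Proposition \ref{inj-crit-0} to produce a critical point in a ball on which $\phi^q$ fails to be injective, and the shrinking of the residue balls along an infinite, hence non-repeating, orbit of directions to force that critical point into the basin of $\sP$. Your reorganization of the first claim (disposing of infinitely many bad iterates by a counting argument, then tracking only from the last bad iterate) and your explicit use of the identification of the directional multiplicity with $\deg_{\vec w}(\phi^q)_\ast$ in the ``Moreover'' part are only minor variants of the paper's induction.
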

In the next section, we establish analogs of these two propositions in positive characteristic, and from this deduce our main result.

\section{Rational maps over fields with positive characteristic}\label{pc}
Assume that the field $K$ has positive characteristic $p>0$. A nonconstant rational map $\phi\in K(z)$ can then be written as $\phi(z)=\phi_1(z^{p^j})$ for some integer $j\ge 0$, where $\phi_1$ is separable. Recall the ramification locus $\sR_\phi=\{\xi\in\mathbb{P}_{\mathrm{Ber}}^1(K): \deg_\xi\phi\ge 2\}$ and $\phi$ is tame if $\sR_\phi$ contains finitely many points with valence at least $3$. We say a rational map $\phi\in K(z)$ is tamely ramified if the characteristic of $K$ does not divide the multiplicity $m_\phi(z)$ for any $z\in\mathbb{P}^1(K)$. The space $\mathbb{P}_{\mathrm{Ber}}^1(K)\setminus\mathbb{P}^1(K)$ carries a natural metrizable topology, the strong topology, see \cite{BR, Fa1}. With respect to this metric, there exists $r>0$ such that the ramification locus $\sR_\phi$ is in an $r$-neighborhood of the connected hull $\mathrm{Hull}(\mathrm{Crit}(\phi))$ of critical set if and only if $\phi$ is tamely ramified \cite[Theorem E]{Fa2}. If $\phi$ is separable, the extreme case $\sR_\phi\subseteq\mathrm{Hull}(\mathrm{Crit}(\phi))$ is equivalent to $\phi$ is tame \cite[Corollary 7.13]{Fa1}.\par
We can prove the following non-Archimedean Rolle's theorem for a separable tame rational map over a field $K$ with positive characteristic.
\begin{lemma}\label{rolle-positive}
Suppose $K$ has positive characteristic $p>0$. Let $\phi\in K(z)$ be a separable tame rational map of degree at least $1$. If $\phi$ has two distinct zeros in the closed disk $\overline{D}(a,r)$, then it has a critical point in $\overline{D}(a,r)$.
\end{lemma}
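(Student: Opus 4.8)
The plan is to argue by contradiction, using the tameness hypothesis to force that $\phi$ has no ramification anywhere on the Berkovich closed disk associated to $\overline{D}(a,r)$, and then to contradict this with the two assumed zeros.

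Suppose $\phi$ has no critical point in $\overline{D}(a,r)$, and let $\overline{\mathbf{D}}\subset\mathbb{P}_{Ber}^1(K)$ be the Berkovich closed disk whose type I points are exactly those of $\overline{D}(a,r)$, with boundary point $\xi_0$. Since $Crit(\phi)\subset\mathbb{P}^1(K)$, our assumption gives $Crit(\phi)\subset\mathbb{P}_{Ber}^1(K)\setminus\overline{\mathbf{D}}$. Now $\mathbb{P}_{Ber}^1(K)\setminus\overline{\mathbf{D}}=\mathbf{B}_{\xi_0}(\vec v)^-$, where $\vec v$ is the direction at $\xi_0$ pointing away from $\overline{\mathbf{D}}$; this set is connected, so the connected hull $Hull(Crit(\phi))$ is contained in it as well, the connected hull of a subset of a connected subtree remaining inside it. As $\phi$ is separable and tame, $\sR_\phi\subseteq Hull(Crit(\phi))$ by the extreme case of tameness \cite[Corollary 7.13]{Fa1}; hence $\sR_\phi\cap\overline{\mathbf{D}}=\emptyset$, i.e. $\deg_\eta\phi=1$ for every $\eta\in\overline{\mathbf{D}}$.

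It then suffices to show that a rational map with local degree $1$ at every point of a closed Berkovich disk is injective on its type I points, for this contradicts $\phi(z_1)=\phi(z_2)=0$ for the two distinct zeros $z_1,z_2$ (and the hypothesis already forces $\deg\phi\ge 2$, so the case $\deg\phi=1$ is vacuous). To see the injectivity, let $\eta\in\overline{\mathbf{D}}$ be the boundary point of the smallest disk containing $z_1$ and $z_2$ — a type II point at which $z_1,z_2$ lie in distinct directions $\vec v_1\ne\vec v_2$. Since $\deg_\eta\phi=1$, the tangent map $\phi_\ast$ is injective at $\eta$, so $\mathbf{B}_{\phi(\eta)}(\phi_\ast\vec v_1)^-$ and $\mathbf{B}_{\phi(\eta)}(\phi_\ast\vec v_2)^-$ are disjoint; by Proposition \ref{fundamental}(2) each $\phi(\mathbf{B}_\eta(\vec v_i)^-)$ equals $\mathbf{B}_{\phi(\eta)}(\phi_\ast\vec v_i)^-$ or all of $\mathbb{P}_{Ber}^1(K)$ and contains $\phi(z_i)=0$, so — $0$ not lying in both disjoint balls — for some $i$ one gets $\phi(\mathbf{B}_\eta(\vec v_i)^-)=\mathbb{P}_{Ber}^1(K)$. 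By Proposition \ref{fundamental}(2)(b) together with $\sR_\phi\cap\overline{\mathbf{D}}=\emptyset$, some value is then attained at two distinct points inside $\mathbf{B}_\eta(\vec v_i)^-$, so the construction repeats, producing a strictly nested sequence of ramification-free Berkovich disks each mapped onto $\mathbb{P}_{Ber}^1(K)$. The main obstacle is to rule this out: one shows by compactness that the nested disks shrink to a point, or a smaller closed disk, of $\overline{\mathbf{D}}$ at which $\phi$ has local degree $1$ and is therefore locally injective, so a deep enough disk in the sequence maps to a proper subdisk — a contradiction. Equivalently, one may invoke directly the structure theory of $\mathbb{P}_{Ber}^1(K)\setminus\sR_\phi$ and the local mapping behaviour of rational maps on Berkovich disks (see \cite{Fa1, BR, Ben}), on which $\phi$ is injective on disks; this is the single ingredient not already supplied above.
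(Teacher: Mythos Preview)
Your proof is correct and shares the paper's overall strategy: argue by contradiction, use tameness to obtain $\sR_\phi\subseteq Hull(Crit(\phi))$, deduce that $\sR_\phi$ misses the Berkovich closed disk $\overline{\mathbf D}$ over $\overline{D}(a,r)$, and then derive a contradiction from injectivity. The difference is entirely in how the injectivity step is executed. The paper notes that $\sR_\phi$ is closed and uses this to enlarge $\overline{\mathbf D}$ to a strictly larger \emph{open} Berkovich ball $\mathbf{B}_\xi(\vec w)^-\supset\overline{\mathbf D}$ still disjoint from $\sR_\phi$; it then cites \cite[Corollary~3.8]{Fa1} (a rational map is injective on any open Berkovich ball disjoint from its ramification locus) to finish in one line. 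Your nested-balls argument, by contrast, works directly on the closed disk: it is correct --- the strict nesting holds because the join of two type~I points in an open ball lies in that ball, the intersection of the nested closed balls is nonempty by compactness of $\mathbb{P}_{Ber}^1$, and local degree~$1$ at the limit point gives a neighborhood of injectivity --- but it is considerably more work, and as you yourself note, the last step still leans on the same structural input from \cite{Fa1}. The one idea you are missing is the paper's enlargement trick via the closedness of $\sR_\phi$, which collapses your entire final paragraph into a single citation.
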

\begin{proof}
Suppose there is no critical point in $\overline{D}(a,r)$. Let $\xi_{a,r}\in\mathbb{P}_{\mathrm{Ber}}^1(K)$ be the point corresponding to the closed disk $\overline{D}(a,r)$. Then $\xi_{a,r}\not\in\mathrm{Hull}(\mathrm{Crit}(\phi))$. Let $\vec{v}\in T_{\xi_{a,r}}\mathbb{P}_{\mathrm{Ber}}^1(K)$ be the direction such that $\overline{D}(a,r)\subset\mathbb{P}_{\mathrm{Ber}}^1(K)\setminus\mathbf{B}_{\xi_{a,r}}(\vec{v})^-$. Then the set $\mathbb{P}_{\mathrm{Ber}}^1(K)\setminus\mathbf{B}_{\xi_{a,r}}(\vec{v})^-$ is disjoint with $\mathrm{Hull}(\mathrm{Crit}(\phi))$. So 
$$\mathbb{P}_{\mathrm{Ber}}^1(K)\setminus\mathbf{B}_{\xi_{a,r}}(\vec{v})^-\cap\sR_\phi=\emptyset.$$ 
Since $\sR_\phi$ is closed, there exist $\xi\in\mathbb{P}_{\mathrm{Ber}}^1(K)$ and $\vec{w}\in T_\xi\mathbb{P}_{Ber}^1(K)$ such that $\mathbb{P}_{\mathrm{Ber}}^1(K)\setminus\mathbf{B}_{\xi_{a,r}}(\vec{v})^-\subset\mathbf{B}_{\xi}(\vec{w})^-$ and $\mathbf{B}_{\xi}(\vec{w})^-\cap\sR_\phi=\emptyset$. Hence $\phi$ is injective on $\mathbf{B}_{\xi}(\vec{w})^-$ \cite[Corollary 3.8]{Fa1}. So $\phi$ is injective on $\mathbb{P}_{\mathrm{Ber}}^1(K)\setminus\mathbf{B}_{\xi_{a,r}}(\vec{v})^-$. Thus $\phi$ is injective on the closed disk $\overline{D}(a,r)$. So $\phi$ has at most one zero in $\overline{D}(a,r)$. It is a contradiction.
\end{proof}
Applying Lemma \ref{rolle-positive} and the argument in \cite[Lemma 4.2]{Ki}, we obtain an analog of Proposition \ref{inj-crit-0}:
\begin{proposition}\label{inj-crit}
Suppose $K$ has positive characteristic $p>0$ and consider a separable tame rational map $\phi:\mathbb{P}_{\mathrm{Ber}}^1(\mathbb{L})\to\mathbb{P}_{\mathrm{Ber}}^1(\mathbb{L})$ of degree at least $2$. Let $\xi\in\mathbb{P}_{\mathrm{Ber}}^1(\mathbb{L})$ be a type II point and let $\vec{v}\in T_{\xi}\mathbb{P}_{\mathrm{Ber}}^1(\mathbb{L})$. If $\phi$ is not injective on $\mathbf{B}_{\xi}(\vec{v})^-$, then there is a critical point of $\phi$ in $\mathbf{B}_{\xi}(\vec{v})^-$ such that the corresponding critical value $\phi(c)\in\mathbf{B}_{\phi(\xi)}(\phi_\ast(\vec{v}))^-$.
\end{proposition}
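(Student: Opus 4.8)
The plan is to run the proof of Proposition \ref{inj-crit-0}, i.e. the argument of \cite[Lemma 4.2]{Ki}, with the residue-characteristic-zero Rolle's theorem replaced throughout by Lemma \ref{rolle-positive}. Write $\Omega:=\mathbf{B}_{\xi}(\vec{v})^-$ and $\Omega':=\mathbf{B}_{\phi(\xi)}(\phi_\ast(\vec{v}))^-$. First I normalize coordinates. Since $\xi$, and hence $\phi(\xi)$, is a type II point, there are $\psi_1,\psi_2\in PGL_2(\mathbb{L})$ with $\psi_1(\xi_G)=\xi$, $\psi_2(\phi(\xi))=\xi_G$, and $(\psi_1)_\ast$, $(\psi_2)_\ast$ carrying $\vec{v}$, respectively $\phi_\ast(\vec{v})$, to the direction at $\xi_G$ toward $0$ under $T_{\xi_G}\mathbb{P}_{Ber}^1(\mathbb{L})\cong\mathbb{P}^1(K)$. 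Replacing $\phi$ by $\psi_2\circ\phi\circ\psi_1$ preserves separability and tameness, since pre- and post-composition with a M\"obius map preserves the critical set and all local degrees, hence the ramification locus, hence the characterization $\sR_\phi\subseteq Hull(Crit(\phi))$ of separable tame maps, and it transports critical points and their images by $\psi_1,\psi_2$; so it is enough to treat the normalized case. Thus I assume $\xi=\phi(\xi)=\xi_G$ and $\vec{v}=\phi_\ast(\vec{v})$ is the direction toward $0$. Then $\Omega$ and $\Omega'$ both have type I points $\{z\in\mathbb{L}:|z|_\mathbb{L}<1\}$; by Proposition \ref{fundamental} the reduction $\rho_{\mathbb{L}}(\phi)$ fixes $0$, and by Proposition \ref{fundamental}(2), $\phi(\Omega)$ equals $\Omega'$ or $\mathbb{P}_{Ber}^1(\mathbb{L})$ and in either case $0\in\Omega'\subseteq\phi(\Omega)$.

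Case one: $\phi(\Omega)=\Omega'$. Then $\Omega\subseteq\phi^{-1}(\Omega')$, and every type I point of $\Omega'$, in particular $0$, has a fixed number $m$ of preimages in $\Omega$ counted with multiplicity, with $m\ge 2$ because $\phi|_\Omega$ is not injective. If $0$ has a preimage $z_0\in\Omega$ of multiplicity $\ge 2$, then $z_0$ is a critical point (for a separable map $m_\phi(z_0)\ge 2$ forces $w_\phi(z_0)\ge 1$, else the local expansion of $\phi$ at $z_0$ would involve only $p$-th powers) with $\phi(z_0)=0\in\Omega'$, and we are done. Otherwise $0$ has two distinct preimages $z_1,z_2\in\Omega$, so $\overline{D}(z_1,|z_1-z_2|_\mathbb{L})\subseteq\Omega$ is a rational closed disk on which $\phi$ has two distinct zeros; Lemma \ref{rolle-positive}, applied over $\mathbb{L}$ to the separable tame map $\phi$, gives a critical point $c\in\overline{D}(z_1,|z_1-z_2|_\mathbb{L})\subseteq\Omega$, and then $\phi(c)\in\phi(\Omega)=\Omega'$. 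Case two: $\phi(\Omega)=\mathbb{P}_{Ber}^1(\mathbb{L})$. Let $V$ be the component of $\phi^{-1}(\Omega')$ containing an initial segment of the $\vec{v}$-direction at $\xi_G$ (which maps into $\Omega'$ by definition of $\phi_\ast$); since $\phi(\xi_G)=\xi_G\notin\Omega'$ and a connected subset of $\mathbb{P}_{Ber}^1(\mathbb{L})\setminus\{\xi_G\}$ lies in one component, $V\subseteq\Omega$ and $\phi|_V: V\to\Omega'$ is proper and surjective. Now $V$ is not a Berkovich ball: an open ball with unique boundary point $\xi_G$ containing the $\vec{v}$-direction is $\Omega$ itself, and $V=\Omega$ would give $\phi(\Omega)=\Omega'$. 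Since a degree-one proper map onto a ball is a homeomorphism onto it, $\phi|_V$ has degree $\ge 2$, hence is not injective, hence $V$ meets $\sR_\phi$ by \cite[Corollary 3.8]{Fa1}; as $\sR_\phi\subseteq Hull(Crit(\phi))$ by tameness, one locates — exactly as in \cite{Ki} — a critical point $c$ of $\phi$ inside $V$, with $\phi(c)\in\phi(V)=\Omega'$.

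The combinatorial skeleton above is that of \cite[Lemma 4.2]{Ki}; the one new ingredient is the non-Archimedean Rolle's theorem, which in positive characteristic fails in general and holds only under the separable tame hypothesis (Lemma \ref{rolle-positive}). Accordingly, the point to verify is that every rational map to which a Rolle-type statement is applied in Kiwi's argument is separable and tame — which it is, being built from $\phi$ by composing with M\"obius transformations — and that the characteristic-zero geometric inputs of \cite{Ki} (injectivity of a rational map off its ramification locus; containment of $\sR_\phi$ in the connected hull of its critical set) are replaced by the tame positive-characteristic versions \cite[Corollary 3.8, Corollary 7.13]{Fa1} already used in proving Lemma \ref{rolle-positive}. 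The delicate step I anticipate is the last one: in the case $\phi(\Omega)=\mathbb{P}_{Ber}^1(\mathbb{L})$, producing a critical point of $\phi$ genuinely inside $V$ — equivalently, ruling out that $Hull(Crit(\phi))$ merely passes through $V$ between critical points outside it — which over $\mathbb{C}$ is subsumed by the classical Rolle/argument-principle bookkeeping and over $\mathbb{L}$ must be re-derived from Lemma \ref{rolle-positive} and the tameness of the auxiliary maps involved.
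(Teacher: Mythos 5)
Your overall strategy is exactly the paper's: the proof given there is literally ``apply Lemma \ref{rolle-positive} and the argument of \cite[Lemma 4.2]{Ki},'' and your normalization, your observation that pre/post-composition by M\"obius maps preserves separability and tameness, and your Case one (where $\phi(\Omega)=\Omega'$, so that $m\ge 2$, and either a multiple preimage of $0$ is a nontrivial critical point or Rolle applied to two distinct preimages produces one, with $\phi(c)\in\phi(\Omega)=\Omega'$ automatic) are a correct and fuller rendering of that one-line proof.

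The one place where your added detail does not hold up is Case two. Your claim that ``$V$ is not a Berkovich ball'' is only argued by ruling out the single possibility $V=\Omega$; $V$ could perfectly well be an open ball with boundary point strictly inside $\Omega$, in which case nothing you have said prevents $\phi|_V$ from being a degree-one homeomorphism onto $\Omega'$, and the inference ``$\phi|_V$ has degree $\ge 2$'' collapses. Indeed, in the case $\phi(\Omega)=\mathbb{P}_{Ber}^1(\mathbb{L})$ Proposition \ref{fundamental}(2)(b) gives $N\ge m+1\ge 2$ preimages of $0$ in $\Omega$, but these may be spread over several components of $\phi^{-1}(\Omega')$, each of degree one; the non-injectivity of $\phi$ on $\Omega$ does not localize to the particular component $V$ you chose. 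The useful counting fact is rather that the degrees of all components of $\phi^{-1}(\Omega')$ inside $\Omega$ sum to $N\ge 2$, and the argument must either find a component of degree $\ge 2$ and work inside it, or handle two distinct degree-one components containing distinct preimages $z_1,z_2$ of $0$ --- and in the latter situation the disk $\overline{D}(z_1,|z_1-z_2|_{\mathbb{L}})$ is \emph{not} contained in $\phi^{-1}(\Omega')$, so the raw Rolle conclusion no longer places $\phi(c)$ in $\Omega'$. You correctly flag this as the delicate step, but ``one locates --- exactly as in \cite{Ki} --- a critical point inside $V$'' is doing real work that neither your write-up nor the appeal to $\sR_\phi\subseteq Hull(Crit(\phi))$ alone supplies: since $V$ has finitely many (possibly several) boundary points, $Hull(Crit(\phi))$ can traverse $V$ between critical points lying outside it. So Case two as written is a genuine gap; it needs either the refined bookkeeping from \cite{Ki} spelled out with Lemma \ref{rolle-positive} in place of the characteristic-zero Rolle theorem, or a reduction of Case two to Case one by restricting $\phi$ to a suitable component of $\phi^{-1}(\Omega')$ on which the image is not all of $\mathbb{P}_{Ber}^1(\mathbb{L})$.
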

We now prove an analogy of Proposition \ref{crit-fin-0}:
\begin{proposition}\label{crit-fin}
Suppose $K$ has positive characteristic $p>0$. Consider a rational map $\phi:\mathbb{P}_{\mathrm{Ber}}^1(\mathbb{L})\to\mathbb{P}_{\mathrm{Ber}}^1(\mathbb{L})$ of nontrivial degree at least $2$ and suppose the separable part $\phi_1$ of $\phi$ is tame. Let $\sO$ be a type II periodic orbit of period $q\ge 1$ of $\phi$. Assume the basin of $\sO$ is free of critical values of $\phi_1$. Then, for all $\xi\in\sO$, every $\vec{v}\in T_{\xi}\mathbb{P}_{\mathrm{Ber}}^1(\mathbb{L})$ with $\phi^q(\mathbf{B}_{\xi}(\vec{v})^-)=\mathbb{P}_{\mathrm{Ber}}^1(\mathbb{L})$ has a finite forward orbit under $(\phi^q)_\ast$. 
\end{proposition}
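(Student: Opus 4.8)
The plan is to follow the proof of Proposition~\ref{crit-fin-0} after two modifications forced by positive characteristic: the inseparable factor of $\phi$ must be peeled off so that the only available non-Archimedean Rolle statement, Lemma~\ref{rolle-positive} and hence Proposition~\ref{inj-crit}, can be applied, and the critical values of $\phi_1$ must play the role that critical points play in the characteristic-zero argument.

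I would begin by recording the behaviour of the inseparable factor. Write $\phi=\phi_1\circ\phi_2$ with $\phi_2(z)=z^{p^j}$ and $\phi_1$ separable and tame. Since $\mathbb{L}$ is perfect, $\phi_2$ is a homeomorphism of $\mathbb{P}_{Ber}^1(\mathbb{L})$: it carries each Berkovich ball homeomorphically onto a Berkovich ball, sends type~II points to type~II points, induces bijections on tangent spaces, and has local degree $p^j$ at every point. Hence, for any ball $B$, one has $\phi(B)=\mathbb{P}_{Ber}^1(\mathbb{L})$ if and only if $\phi_1(\phi_2(B))=\mathbb{P}_{Ber}^1(\mathbb{L})$, and $\phi|_B$ is injective if and only if $\phi_1|_{\phi_2(B)}$ is injective; moreover $\phi_2$ restricts to a bijection $Crit_0(\phi)\to Crit(\phi_1)$ with $\phi(c)=\phi_1(\phi_2(c))$, so the images under $\phi$ of the nontrivial critical points of $\phi$ are exactly the critical values of $\phi_1$, a finite set of size at most $2\,\text{deg}\,\phi_1-2$. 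The upshot is a dictionary: wherever the proof of Proposition~\ref{crit-fin-0} uses the characteristic-zero Rolle input to locate a critical point of the map inside a ball on which the map is not injective, the present argument applies Proposition~\ref{inj-crit} to the separable tame map $\phi_1$ on the corresponding $\phi_2$-image ball, obtaining a critical point of $\phi_1$ there and hence a critical value of $\phi_1$ in its image.

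With this dictionary in hand I would reproduce Kiwi's bookkeeping along the cycle. Enumerate $\sP=\{\xi_0,\dots,\xi_{q-1}\}$ with $\xi_{i+1}=\phi(\xi_i)$ (indices mod $q$), fix $\vec{v}_0\in T_{\xi_0}\mathbb{P}_{Ber}^1(\mathbb{L})$ with $\phi^q(\mathbf{B}_{\xi_0}(\vec{v}_0)^-)=\mathbb{P}_{Ber}^1(\mathbb{L})$, and set $\vec{v}_i=(\phi^i)_\ast\vec{v}_0$ and $B_i=\mathbf{B}_{\xi_i}(\vec{v}_i)^-$. By Proposition~\ref{fundamental}$(2)$, at each step $\phi(B_i)$ equals either $B_{i+1}$ or all of $\mathbb{P}_{Ber}^1(\mathbb{L})$; the second alternative, once it occurs, persists, and it occurs within one period. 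At the first index $i_0$ where it occurs, $\phi$ is not injective on $B_{i_0}$ (by Proposition~\ref{fundamental}$(2)$, its restriction there has degree at least $2$), so the preceding paragraph places a critical value of $\phi_1$ in $B_{i_0+1}$. One then argues exactly as in Proposition~\ref{crit-fin-0}: the escaping directions at the points of $\sP$ --- those whose ball surjects under some forward iterate --- are finite in number, because each of them is eventually carried into one of the finitely many directions whose ball meets the finite set of critical values of $\phi_1$, whereas a direction all of whose forward iterates are non-escaping has its orbit of balls attracted to $\sP$, hence lying in the basin of $\sP$, hence --- by hypothesis --- free of critical values of $\phi_1$. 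Consequently the forward orbit of $\vec{v}_0$ under $(\phi^q)_\ast$ is trapped in this finite set of escaping directions, and is therefore finite.

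The main obstacle is exactly the point where positive characteristic enters: Lemma~\ref{rolle-positive}, and therefore Proposition~\ref{inj-crit}, is available only for separable \emph{and} tame maps, while $\phi$ itself need be neither. The substantive work lies in verifying that the inseparable factor $\phi_2=z^{p^j}$ is transparent to every operation used in the argument --- formation of Berkovich balls, injectivity on balls, surjectivity onto $\mathbb{P}_{Ber}^1(\mathbb{L})$, and the correspondence $Crit_0(\phi)\leftrightarrow Crit(\phi_1)$ --- so that running the characteristic-zero argument on $\phi_1$ is legitimate, together with recognising that ``a critical value of $\phi_1$ avoiding the basin of $\sP$'' is the correct positive-characteristic replacement for Kiwi's hypothesis. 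The same phenomenon explains why the postcritically-finite refinement of Proposition~\ref{crit-fin-0} is not asserted here: $(\phi^q)_\ast$ may itself be inseparable, so that conclusion would require a separate analysis.
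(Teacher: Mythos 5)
Your reduction to the separable part---treating $\phi_2(z)=z^{p^j}$ as a bijection of $\mathbb{P}_{Ber}^1(\mathbb{L})$ compatible with balls and tangent maps, and replacing ``critical point of $\phi$'' by ``critical value of $\phi_1$'' so that Proposition \ref{inj-crit} can be applied to the tame separable map $\phi_1$---is exactly what the paper does: its first step takes the smallest $q_0$ with $\phi_1(\mathbf{B}_{\phi_2\circ\phi^{q_0}(\xi)}((\phi_2\circ\phi^{q_0})_\ast(\vec{v}))^-)=\mathbb{P}_{Ber}^1(\mathbb{L})$ and produces $c\in Crit(\phi_1)$ with $\phi_1(c)$ in the next ball along the orbit. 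Up to that point your proposal is sound and coincides with the paper.

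The closing step is where there is a genuine gap. You claim (i) that the ``escaping'' directions at points of $\sP$ (those whose ball surjects under some forward iterate) form a finite set, and (ii) that the forward orbit of $\vec{v}_0$ under $(\phi^q)_\ast$ is trapped in that set. Neither holds. Only the directions whose ball surjects under a \emph{single} application of $\phi^q$ are finite in number (by the surplus count in Proposition \ref{fundamental}(2)(b)); a direction is escaping as soon as some forward iterate hits one of these, so the escaping set is a union of backward orbits under $(\phi^q)_\ast$ and is typically infinite, and your stated reason (``each is eventually carried into one of finitely many directions'') cannot bound its cardinality, since infinitely many directions may be carried to the same one. Moreover, escaping is not forward-invariant: $\phi^q(\mathbf{B}_{\xi}(\vec{v}_0)^-)=\mathbb{P}_{Ber}^1(\mathbb{L})$ says nothing about the forward images of $\mathbf{B}_{\xi}((\phi^q)_\ast\vec{v}_0)^-$, so the orbit of $\vec{v}_0$ is not ``trapped.'' The paper instead argues by contradiction, and you have all the ingredients for it: if the orbit of $\vec{v}_0$ were infinite, its terms would be pairwise distinct, so for $n$ large the ball of $(\phi^{nq})_\ast\vec{v}_0$ maps onto the next ball rather than onto $\mathbb{P}_{Ber}^1(\mathbb{L})$; these balls lie in pairwise distinct directions at $\xi$ and therefore eventually enter every neighborhood of $\xi$ in the Gelfand topology; an induction you gesture at but do not carry out shows that every ball from index $q_0+1$ onward contains a point of the forward orbit of a critical value of $\phi_1$; hence $\phi^{nq+l}(\phi_1(c))\to\xi$, and $\phi_1(c)$ lies in the basin of $\sP$, contradicting the hypothesis. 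As written, your final sentence does not deliver finiteness of the orbit.
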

\begin{proof}
Let $\vec{v}\in T_{\xi}\mathbb{P}_{\mathrm{Ber}}^1(\mathbb{L})$ such that $\phi^q(\mathbf{B}_{\xi}(\vec{v})^-)=\mathbb{P}_{\mathrm{Ber}}^1(\mathbb{L})$ and $\vec{v}$ has an infinite forward orbit under $(\phi^q)_\ast$. We will show there exists a critical value of $\phi_1$ in the basin of $\sO$. Let $q_0\ge 1$ be the smallest integer such that 
$$\phi_1(\mathbf{B}_{\phi_2\circ\phi^{q_0}(\xi)}((\phi_2\circ\phi^{q_0})_\ast(\vec{v}))^-)=\mathbb{P}_{\mathrm{Ber}}^1(\mathbb{L}).$$ 
Then by Proposition \ref{fundamental} and Proposition \ref{inj-crit}, there is a critical point $c\in\mathrm{Crit}(\phi_1)$ such that $\phi_1(c)\in\mathbf{B}_{\phi^{{q_0}+1}(\xi)}((\phi^{{q_0}+1})_\ast(\vec{v}))^-$. Now we show for each $n\ge {q_0+1}$, $\mathbf{B}_{\phi^{n}(\xi)}((\phi^{n})_\ast(\vec{v}))^-$ contains a point in the forward orbit of a critical value of $\phi_1$. By induction, suppose it holds for $n=k\ge q_0+1$. If $\phi(\mathbf{B}_{\phi^{k}(\xi)}((\phi^{k})_\ast(\vec{v}))^-)=\mathbf{B}_{\phi^{k+1}(\xi)}((\phi^{k+1})_\ast(\vec{v}))^-$, then $\mathbf{B}_{\phi^{k+1}(\xi)}((\phi^{k+1})_\ast(\vec{v}))^-$ contains a point in the forward orbit of a critical value of $\phi_1$. If $\phi(\mathbf{B}_{\phi^{k}(\xi)}((\phi^{k})_\ast(\vec{v}))^-)=\mathbb{P}_{\mathrm{Ber}}^1(\mathbb{L})$, then $\phi_1(\mathbf{B}_{\phi_2\circ\phi^{k}(\xi)}((\phi_2\circ\phi^{k})_\ast(\vec{v}))^-)=\mathbb{P}_{\mathrm{Ber}}^1(\mathbb{L})$. By Proposition \ref{fundamental} and Proposition \ref{inj-crit}, $\mathbf{B}_{\phi^{k+1}(\xi)}((\phi^{k+1})_\ast(\vec{v}))^-$ contains a critical value of $\phi_1$.\par
Thus, for $n$ large, $\mathbf{B}_{\phi^{nq}(\xi)}((\phi^{nq})_\ast(\vec{v}))^-$ contains a point in the forward orbit of a critical value of $\phi_1$. Note for $n$ sufficiently large, say $n\ge n_0$, 
$$\phi(\mathbf{B}_{\phi^{nq}(\xi)}((\phi^{nq})_\ast(\vec{v}))^-)\not=\mathbb{P}_{\mathrm{Ber}}^1(\mathbb{L}).$$ 
Suppose $\phi^l(\phi_1(c))\in\mathbf{B}_{\phi^{n_0q}(\xi)}((\phi^{n_0q})_\ast(\vec{v}))^-$ for some $c\in \mathrm{Crit}(\phi_1)$ and $l\ge 0$, then $\phi^{nq+l}(\phi_1(c))\to\xi$, as $n\to\infty$, in the weak topology. Thus $\phi_1(c)$ is in the basin of the periodic cycle $\sO$.
\end{proof}
\begin{corollary}\label{crit-fin-rl}
Under the same assumptions in Proposition \ref{crit-fin}, if $\deg_0(\phi^q)_\ast\ge 2$, then $(\phi^q)_\ast$ is postcritically finite at the nontrivial critical points.
\end{corollary}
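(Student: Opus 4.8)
The plan is to argue by contradiction, adapting to positive characteristic the argument of \cite{Ki} behind the last assertion of Proposition \ref{crit-fin-0}, with Proposition \ref{inj-crit} and Proposition \ref{crit-fin} playing the roles of their characteristic zero counterparts. Fix $\xi\in\sP$ and identify $T_\xi\mathbb{P}_{Ber}^1(\mathbb{L})$ with $\mathbb{P}^1(K)$, so that $g:=(\phi^q)_\ast$ becomes a rational map over $K$ of nontrivial degree $\text{deg}_0\, g\ge 2$. By the definition of postcritical finiteness at nontrivial critical points and the fact that the critical values of the separable part of $g$ are precisely the points $g(c)$ with $c$ a nontrivial critical point of $g$, it suffices to prove that for each nontrivial critical point $c\in\mathbb{P}^1(K)=T_\xi\mathbb{P}_{Ber}^1(\mathbb{L})$ of $g$ the forward orbit of $w:=g(c)$ under $g$ is finite. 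Suppose it is infinite; equivalently, the directions $w,g(w),g^2(w),\dots$ are pairwise distinct.

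I would first show that $\phi^q$ is not injective on $B:=\mathbf{B}_\xi(\vec{v})^-$, where $\vec{v}\in T_\xi\mathbb{P}_{Ber}^1(\mathbb{L})$ is the direction corresponding to $c$. By Proposition \ref{fundamental}, either $\phi^q(B)=\mathbb{P}_{Ber}^1(\mathbb{L})$, in which case Proposition \ref{crit-fin} (whose hypotheses hold by assumption) forces the $g$-orbit of $\vec{v}$, hence of $w=g(\vec{v})$, to be finite -- a contradiction; or $\phi^q(B)$ is the ball $\mathbf{B}_\xi(w)^-$ and $\phi^q$ maps $B$ onto it with degree equal to the directional multiplicity of $\phi^q$ at $\vec{v}$, which, after conjugating $\xi$ to the Gauss point, equals $m_g(c)\ge 2$. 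In this second case $\phi^q$ is at least $2$-to-one on $B$, so not injective. Now write $\phi=\phi_1\circ\phi_2$ with $\phi_2(z)=z^{p^j}$, and note that $\phi_2$ maps every directional ball bijectively onto a directional ball; hence the failure of injectivity of $\phi^q=(\phi_1\circ\phi_2)^{\circ q}$ on $B$ must occur at one of the $q$ applications of $\phi_1$. Let $n_0$ (with $0\le n_0\le q-1$) index the first such application. Applying Proposition \ref{inj-crit} to the separable tame map $\phi_1$ on the corresponding image ball yields a critical point $c_1\in Crit(\phi_1)$ whose critical value $v_1:=\phi_1(c_1)$ lies in $\mathbf{B}_{\zeta_0}(\vec{u}_0)^-$, where $\zeta_0:=\phi^{n_0+1}(\xi)\in\sP$ and $\vec{u}_0:=(\phi^{n_0+1})_\ast\vec{v}$.

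The goal is then to show that $v_1$ lies in the basin of $\sP$, contradicting the standing hypothesis. Put $\zeta_n:=\phi^{n_0+1+n}(\xi)\in\sP$ and $\vec{u}_n:=(\phi^{n_0+1+n})_\ast\vec{v}$ for $n\ge 0$. I would check that $\phi$ is never surjective on $\mathbf{B}_{\zeta_n}(\vec{u}_n)^-$: otherwise $\phi^q$ is surjective there too, so Proposition \ref{crit-fin} makes the $(\phi^q)_\ast$-orbit of $\vec{u}_n$ finite; but $(\phi^q)^k_\ast(\vec{u}_n)=(\phi^{n_0+1+n})_\ast(g^{k-1}(w))$ for $k\ge 1$, and since $(\phi^{n_0+1+n})_\ast$ is a rational map over $K$ with finite fibers while the $g^{k-1}(w)$ are pairwise distinct, this orbit is infinite -- a contradiction. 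Consequently $\phi(\mathbf{B}_{\zeta_n}(\vec{u}_n)^-)=\mathbf{B}_{\zeta_{n+1}}(\vec{u}_{n+1})^-$ for all $n$, so $\phi^n(\mathbf{B}_{\zeta_0}(\vec{u}_0)^-)=\mathbf{B}_{\zeta_n}(\vec{u}_n)^-$ and in particular $\phi^n(v_1)\in\mathbf{B}_{\zeta_n}(\vec{u}_n)^-$ for every $n$. Finally, along each residue class of $n$ modulo $q$ the point $\zeta_n$ is a fixed $\zeta\in\sP$ while $\vec{u}_n$ runs through the image of the pairwise distinct points $g^k(w)$ under a fixed finite-degree map $T_\xi\mathbb{P}_{Ber}^1(\mathbb{L})\to T_\zeta\mathbb{P}_{Ber}^1(\mathbb{L})$, hence through an infinite subset of $T_\zeta\mathbb{P}_{Ber}^1(\mathbb{L})$; since every neighbourhood of a type II point contains all but finitely many of the balls attached to its directions, $\mathbf{B}_{\zeta_n}(\vec{u}_n)^-$ is eventually contained in any prescribed neighbourhood of $\sP$. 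Therefore the open ball $\mathbf{B}_{\zeta_0}(\vec{u}_0)^-$, and with it $v_1$, lies in the basin of $\sP$, which is the desired contradiction. Hence the orbit of $w$ is finite, and, $c$ being an arbitrary nontrivial critical point, $(\phi^q)_\ast$ is postcritically finite at its nontrivial critical points.

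The main difficulty is the bookkeeping in the last two paragraphs: one has to propagate the membership $\phi^n(v_1)\in\mathbf{B}_{\zeta_n}(\vec{u}_n)^-$ through the full inseparable--separable alternation and, at every stage, rule out the ``surjective'' alternative in Proposition \ref{fundamental} via Proposition \ref{crit-fin}; this is exactly where the assumed infiniteness of the orbit of $w$ is used in an essential way, both to derive a contradiction from surjectivity and to make the cycling directions $\vec{u}_n$ leave every finite set. A secondary technical point, needed at the outset to pass from $m_g(c)\ge 2$ to the non-injectivity of $\phi^q$ on $B$, is the identification of the covering degree of $\phi^q$ over a non-surjective directional ball with the multiplicity of the induced tangent map; this is standard and again follows from Proposition \ref{fundamental} after moving $\xi$ to the Gauss point.
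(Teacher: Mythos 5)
Your argument is correct and follows essentially the same route as the paper's proof: locate a critical value of $\phi_1$ in a directional ball along the orbit via Proposition \ref{inj-crit}, rule out the surjective alternative at each stage via Proposition \ref{crit-fin}, and push that critical value into the basin of $\sP$ through the nested directional balls, contradicting the hypothesis that the basin is free of critical values of $\phi_1$. The one place you are quicker than warranted is the inference from $m_g(c)\ge 2$ to set-theoretic non-injectivity of $\phi^q$ on $B$ (in positive characteristic the multiplicity can be carried entirely by Frobenius factors, which are bijections), but because $c$ is a \emph{nontrivial} critical point the multiplicity of the separable part is itself $\ge 2$, which is exactly what your next sentence requires --- and the paper's own proof is no more explicit at this step.
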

\begin{proof}
Suppose $(\phi^q)_\ast$ is not postcritically finite at the nontrivial critical points. Let $\vec{v}\in T_{\xi}\mathbb{P}_{\mathrm{Ber}}^1(\mathbb{L})$ be a nontrivial critical point of $(\phi^q)_\ast$ with infinite forward orbit, then there exists  $j\ge 0$ such that $(\phi_2\circ\phi^j)^{-1}(\mathrm{Crit}(\phi_1))\cap\mathbf{B}_{\xi}(\vec{v})^-\not=\emptyset$.  If $\phi^n(\mathbf{B}_{\xi}(\vec{v})^-)\not=\mathbb{P}_{\mathrm{Ber}}^1(\mathbb{L})$ for all $n\ge 1$, then 
$$\phi_1(\mathrm{Crit}(\phi_1))\cap\mathbf{B}_{\phi^{j+1}(\xi)}((\phi^{j+1})_\ast(\vec{v}))^-\not=\emptyset.$$
Hence for all $n\ge 0$, 
$$\phi^n(\phi_1(\mathrm{Crit}(\phi_1)))\cap\mathbf{B}_{\phi^{n+j+1}(\xi)}((\phi^{n+j+1})_\ast(\vec{v}))^-\not=\emptyset.$$ 
So there exists $c\in\mathrm{Crit}(\phi_1)$ such that $\phi_1(c)$ in the basin of $\sO$. If there exists $n_0\ge 1$ such that $\phi^{n_0}(\mathbf{B}_{\xi}(\vec{v})^-)=\mathbb{P}_{\mathrm{Ber}}^1(\mathbb{L})$, then $\vec{v}$ has a finite forward orbit by Proposition \ref {crit-fin}.
\end{proof}
Based on Proposition \ref{crit-fin-0} and Corollary \ref{crit-fin-rl}, applying the argument in \cite{Ki}, we can prove Theorem \ref{main-thm}.
\begin{proof}[Proof of Theorem \ref{main-thm}]
Let $\{M_t^{(1)}\},\cdots,\{M_t^{(n)}\}$ be pairwise dynamically independent rescalings for $\{f_t\}$ of periods $q_1,\cdots,q_n$ such that the corresponding rescaling limits are not postcritically finite at nontrivial critical points. Let $\mathbf{f}$ be the associated rational map of $\{f_t\}$ and $\mathbf{M}^{(1)},\cdots,\mathbf{M}^{(n)}$ be the associated rational maps of $\{M_t^{(1)}\},\cdots,\{M_t^{(n)}\}$. Let $\xi_j=\mathbf{M}^{(j)}(\xi_G)\in\mathbb{P}_{\mathrm{Ber}}^1(\mathbb{L})$ for $j=1,\cdots,n$. Then by Proposition \ref{main-prop}, for all $j=1,\cdots,n$, $\mathbf{f}\ ^{q_j}_\ast:T_{\xi_j}\mathbb{P}_{\mathrm{Ber}}^1(\mathbb{L})\to T_{\xi_j}\mathbb{P}_{\mathrm{Ber}}^1(\mathbb{L})$ is not postcritically finite at the nontrivial critical points, and the points $\xi_1,\cdots,\xi_n$ are in pairwise distinct periodic orbits of $\mathbf{f}$. Note the separable part of $\mathbf{f}$ has at most $2\deg_0\mathbf{f}-2$ critical points, hence it has at most $2\deg_0\mathbf{f}-2$ critical values. Then by Proposition \ref{crit-fin-0} for the case $char\ K=0$ and Corollary \ref{crit-fin-rl} for the case $char\ K>0$, we have $n\le 2\deg_0\mathbf{f}-2$.
\end{proof}

\section{Examples}\label{ex}
In this section, we give some examples to illustrate rescaling limits in non-Archimedean dynamics. We refer \cite{Ki} for more examples. All examples in \cite{Ki} are holomorphic families over $\C$, which can be considered as analytic families over non-Archimedean fields.\par 
Now let $p>0$ be a prime number. Denote by $K$ the field $\overline{\F}_p[[s^\mathbb{Q}]]$ of Hahn series over $\overline{\F}_p$ with respect to its nontrivial non-Archimedean absolute value. Then $char\ K=p>0$.
\begin{example}Polynomials with quadratic separable parts.
\end{example}
Given sufficiently small $t\in K\setminus\{0\}$. Consider the map
$$f_t(z)=(z^p-(s+t))(z^p-s)\in K(z).$$
Then associated map $\mathbf{f}$ of $\{f_t\}$ has separable part $\mathbf{f_1}(z)=(z-(s+t))(z-s)$. Note that $\mathbf{f_1}$ is tame if and only if $p\ge 3$. In fact, if $p=2$, $\mathbf{f_1}$ has only one critical point at $\infty$.\par
Note $\{f_t\}$ is an analytic family that has good reduction. Then, up to equivalence, the moving frame $\{M_t(z)=z\}$ is the unique possible rescaling. The corresponding limit is $g(z)=(z^p-s)^2$. If $p=2$, then $g(z)=z^4+s^2$ has nontrivial degree $1$. If $p\ge 3$, $g(z)=(z^p-s)^2$  has separable part $g_1(z)=(z-s)^2$. Note $\mathrm{Crit}(g_1)=\{s,\infty\}$ and $g_1(s)$ has an infinite forward orbit under $g$. Thus $g$ is a rescaling limit that is not postcritically finite at nontrivial critical points.

\begin{example}Connected Julia sets.
\end{example}
Let $a\in K$ with $0<|a|_K<1$ and $b\in K$ with $|b|_K=|b-1|_K=1$. Define 
$$\phi_{a,b}(z)=\frac{az^6+1}{az^6+z(z-1)(z-b)}\in K(z).$$
Then the Berkovich Julia set $J_{\mathrm{Ber}}(\phi_{a,b})$ is connected but not contained in a line segment \cite{BBC}.\par 
For sufficiently small $t\in K\setminus\{0\}$, let $a=t^q$, where $q=3p^2+1$, and fix $b\in K$. Define $f_t(z)=\phi_{t^q,b}(z^p)$. Then $\{f_t\}$ is a degenerated analytic family defined in a neighborhood of $t=0$. Let $\mathbf{f}$ be the associated map of $\{f_t\}$. Then the separable part $\mathbf{f_1}$ of $\mathbf{f}$ is $\mathbf{f_1}(z)=(t^qz^6+1)/(t^qz^6+z(z-1)(z-b))$, which has a connected Berkovich Julia set $J_{\mathrm{Ber}}(\mathbf{f_1})$.\par 
First the moving frame $\{M_t(z)=z\}$ is a rescaling of period $1$ with rescaling limit 
$$g(z)=\frac{1}{z(z-1)(z-b)}\circ z^p.$$
Note $\mathbf{f_1}$ maps the segment $[\xi_{0,|t|^{q/2}},\xi_G]$ isometrically onto the segment $[\xi_{0,|t|^{-q/2}},\xi_G]$. And $\mathbf{f_1}$ maps the segment $[\xi_G,\xi_{0,|t|^{-q/6}}]$ bijectively to the segment $[\xi_G,\xi_{0,|t|^{q/2}}]$ and the segment $[\xi_{0,|t|^{-q/6}},\xi_{0,|t|^{-q/3}}]$ bijectively to the segment $[\xi_{0,|t|^{q/2}},\xi_G]$, stretching by a factor of $3$, respectively. We may expect there exists a point $\xi_{0,r}\in\mathbb{P}^1_{\mathrm{Ber}}(\mathbb{L})$ such that $\mathbf{f}^2(\xi_{0,r})=\xi_{0,r}$. Indeed, we can choose $r=|t|_\mathbb{L}$. Let $L_t(z)=tz$. Then the moving frame $\{L_t(z)\}$ is a rescaling of period $2$ leading to rescaling limit 
$$h(z)=
\begin{cases}
\frac{1}{b^{6}z^3}\circ z^{4}, &\text{if}\ p=2,\\
-\frac{1}{b^{9}z}\circ z^{27}, &\text{if}\ p=3,\\
-\frac{1}{b^{3p}z^3}\circ z^{p^2}, &\text{if}\ p\ge 5.
\end{cases}$$ \par

\begin{example}McMullen maps.
\end{example}
This example is an analog of \cite[2.4]{Ki}. Given sufficiently small $t\in K\setminus\{0\}$, consider the map 
$$f_t(z)=z^{2p}+\frac{t^{1+2p^2}}{z^p}\in K(z).$$
Then $\{f_t\}$ is a degenerate analytic family defined in a neighborhood of $t=0$. The associated map $\mathbf{f}$ of $\{f_t\}$ has separable part $\mathbf{f_1}(z)=z^{2}+t^{1+2p^2}/z$. By \cite[Corollary 6.6]{Fa1}, when $p\ge 5$, the map $\mathbf{f_1}(z)$ is tame. In fact, when $p=3$, at every type II point, $\phi$ has separable reduction. Then by \cite[Corollary 7.13]{Fa1}, $\phi$ is tame. When $p=2$, $\phi$ has a unique critical point. Thus, $\phi$ is tame if and only if $p\ge 3$.\par
Obviously, the moving frame $\{M_t(z)=z\}$ is a rescaling of period $1$. The corresponding rescaling limit is $g(z)=z^{2p}$, which has a tame separable part $g_1(z)=z^2$. Note the nontrivial critical set $\mathrm{Crit}_0(g)=\{0,\infty\}$. So the rescaling limit $g$ is postcritically finite at the nontrivial critical points.\par
Moreover, the moving frame $\{L_t(z)=tz\}$ is a rescaling of period $2$ for $\{f_t\}$, which leads to the rescaling limit $h(z)=z^{-2p^2}$. The rescaling limit $h(z)$ is also postcritically finite at the nontrivial critical points.\par

\subsection*{Acknowledgements}
The author is grateful to the anonymous referee for valuable comments. The author would like to thank Jan Kiwi for explanation of his work and useful suggestions. The author would also like to thank Robert Benedetto, Laura DeMarco and Xander Faber for useful comments.

\bibliographystyle{siam}
\bibliography{references}

\end{document}